\newcounter{corr}
\definecolor{violet}{rgb}{0.580,0.,0.827}
\newcommand{\corr}[3]{\typeout{Warning : a correction remains in page
\thepage}
				\stepcounter{corr}        
				{\color{blue}\ifmmode\text{\,\sout{\ensuremath{#1}}\,}\else\sout{#1}\fi}
        {\color{red}#2}
        {\color{violet} #3}}
\newcommand{\email}[1]{\href{mailto:#1}{#1}}
\newcommand{\ie}{i.e.}
\newcommand{\eg}{e.g.}
\DeclareMathOperator{\optr}{tr}
\DeclareMathOperator{\opdev}{dev}
\newcommand{\GRAD}{\vec{\nabla}}
\newcommand{\GRADs}{\GRAD_\symm}
\newcommand{\DIV}{\vec{\nabla}{\cdot}}
\newcommand{\ud}{{\rm d}}
\newcommand{\st}{\; ; \;}
\newcommand{\Id}[1][d]{\vec{I}_{#1}}
\newcommand{\norm}[2][]{\|#2\|_{#1}}
\newcommand{\seminorm}[2][]{|#2|_{#1}}
\newcommand{\symm}{{\rm s}}
\newcommand{\Real}{\mathbb{R}}
\newcommand{\Natural}{\mathbb{N}}
\newcommand{\Lvec}[2][{2}]{\textbf{L}^{#1}(#2)}
\newcommand{\Lmat}[2][{2}]{\mathbb{L}^{#1}(#2)}
\newcommand{\Hvec}[2][{1}]{\textbf{H}^{#1}(#2)}
\newcommand{\HvecGamD}[2][{1}]{\textbf{H}_{0,\Gamma_D}^{#1}(#2)}
\newcommand{\normal}{\vec{n}}
\newcommand{\ms}[1][]{\matr{\sigma}_{#1}}
\newcommand{\vu}[1][]{\vec{u}_{#1}}
\newcommand{\vv}[1][]{\vec{v}_{#1}}
\newcommand{\vw}[1][]{\vec{w}_{#1}}
\newcommand{\vf}[1][]{\vec{f}_{#1}}
\newcommand{\tph}[1][h]{\widehat{p}_h}
\newcommand{\Km}{K_{\rm m}}
\newcommand{\Kd}{K_{\rm d}}
\newcommand{\Kf}{K_{\rm f}}
\newcommand{\tF}{t_{\rm F}}
\newcommand{\dt}{{\rm d}_t}
\newtheorem{theorem}{Theorem}
\newtheorem{proposition}[theorem]{Proposition}
\newtheorem{lemma}[theorem]{Lemma}
\theoremstyle{remark}
\newtheorem{remark}[theorem]{Remark}
\theoremstyle{definition}
\newtheorem{assumption}[theorem]{Assumption}
\newcommand{\Var}{\mathbb{V}\hspace{-.2mm}{\rm ar}}
\newcommand{\Esp}{\mathbb{E}}
\newcommand{\Cov}{\mathbb{C}\hspace{-.2mm}{\rm ov}}
\title{Numerical approximation of poroelasticity with \\ random coefficients using Polynomial Chaos \\ and Hybrid High-Order methods}
\author[1]{Michele Botti\footnote{\email{michele.botti@polimi.it}}}
\author[2]{Daniele A. {Di~Pietro}\footnote{\email{daniele.di-pietro@umontpellier.fr}}}
\author[3]{Olivier {Le~Ma\^itre}\footnote{\email{olivier.le-maitre@polytechnique.edu}}}
\author[4]{Pierre Sochala\footnote{\email{p.sochala@brgm.fr}}}
\affil[1]{
  MOX, Politecnico di Milano, 20133 Milano, Italy}
\affil[2]{
  IMAG, Université de Montpellier, CNRS, 34090 Montpellier, France}
\affil[3]{
  CMAP, CNRS, INRIA, Ecole Polytechnique, 91128  Palaiseau, France}
\affil[4]{
  Bureau de Recherches Géologiques et Minières, 45060 Orléans, France}
\begin{document}

\maketitle

\begin{abstract}
In this work, we consider the Biot problem with uncertain poroelastic coefficients. The uncertainty is modelled using a finite set of parameters with prescribed probability distribution. We present the variational formulation of the stochastic partial differential system and establish its well-posedness. 
We then discuss the approximation of the parameter-dependent problem by non-intrusive techniques based on Polynomial Chaos decompositions. We specifically focus on sparse spectral projection methods, which essentially amount to performing an ensemble of deterministic model simulations to estimate the expansion coefficients. 
The deterministic solver is based on a Hybrid High-Order discretization supporting general polyhedral meshes and arbitrary approximation orders.
We numerically investigate the convergence of the probability error of the Polynomial Chaos approximation with respect to the level of the sparse grid. Finally, we assess the propagation of the input uncertainty onto the solution considering an injection-extraction problem.
\medskip\\
\textbf{Key words.} 
Biot problem,
poroelasticity,
Uncertainty Quantification,
Polynomial Chaos expansions,
Pseudo-Spectral Projection methods,
Hybrid High-Order methods
\medskip\\
\textbf{AMS subject classification.} 65C30, 65M60, 65M70, 35R60, 76S05 
\end{abstract}


\section{Introduction}

This work aims at the study poroelasticity problems where the model coefficients are uncertain. 
The interest of this type of hydro-mechanical coupled models is particularly manifest in geosciences applications~\cite{Hu.Winterfield.ea:13,Jah.Juanes:14,Minkoff.Stone.ea:03,Mehrabian.Abousleiman:15}, where subsurface fluid flows can induce a deformation of the rock matrix. 
We rely here on the linear Biot model~\cite{Biot:41,Terzaghi:43} that describes Darcean fluid flows in saturated porous media under the assumptions of small deformations and small variations of the porosity and the fluid density. 
This model depends on physical parameters that are often poorly known, justifying a stochastic description, due spatial heterogeneities, measurement inaccuracies, and sometimes the ill-posedness of inverse problems inherent to parameter estimation techniques. 
Although there is an extensive literature on poroelasticity models (we mention, in particular, the comprehensive textbooks~\cite{Coussy:04,Detournay.Cheng:93,Wang:00}) and their numerical approximation, to our knowledge few works have addressed the impact of uncertainty in Biot's formulation of hydromechanical coupling. In~\cite{Chang:85,Hong:91} the authors include uncertainty in one-dimensional consolidation analysis by incorporating heterogeneity in the consolidation coefficient. The stochastic consolidation model was extended in~\cite{Nishimura:02} to nonlinear uncertain soil parameters. Variabilities in the initial pore pressure and the heterogeneous hydraulic mobility have been considered respectively in~\cite{Darrag.Tawil:93} and~\cite{Frias.Murad.ea:04}. We also mention~\cite{Delgado.Kumar:15}, where a stochastic Galerkin approach is proposed to solve the poroelasticity equations with randomness in all material parameters and tested on a one-dimensional problem.
 
Uncertainty Quantification (UQ) methods have been developed in the last decades to take into account the effect of random model inputs on the model predictions. 
Compared to a simple deterministic simulation, advanced UQ methods can characterize an uncertain model prediction, computing its statistical moments (\eg, mean and variance), probability distributions, and performing global sensitivity analyses.
Among the techniques designed for UQ in numerical models, the stochastic spectral methods have received considerable attention. The principle of these methods is to decompose random quantities on suitable stochastic approximation bases. 
In particular, Polynomial Chaos (PC) expansions represent quantities as finites series in stochastic orthogonal polynomial bases. PC were initially introduced by Wiener~\cite{Wiener:38} and applied by Ghanem and Spanos~\cite{Ghanem:91} to solid mechanics and by Le Ma\^itre and Knio~\cite{LeMaitre:02} to fluid mechanics. PC expansion have been used to treat a large variety of problems, including elliptic models (see, \eg,~\cite{Babuska:02,Matthies.Keese:05}), flow and transport in porous media (see, \eg,~\cite{Ghanem.Dham:98, Ghanem:98}), thermal problems (see, \eg,~\cite{Hein.Kleiber:97,Liu.Hu.ea:01}), and hyperbolic systems (see, \eg,~\cite{Ern.LeMaitre.ea:10}). 
In this work, we consider a non-intrusive spectral projection method, that only requires the resolution of several deterministic problems to construct the stochastic spectral expansion of quantities of interest (``black box'' approach).
The numerical solution of the deterministic poroelasticity coupled system requires a discretization method able to
\begin{inparaenum}[(i)]
\item treat complex geometry with polyhedral meshes and nonconforming interfaces, 
\item handle possible heterogeneities of the poromechanical parameters, and 
\item prevent localized pressure oscillations arising in the case of low-permeable and low-compressible porous media. 
\end{inparaenum}
We choose to discretize the poroelasticity problem using the fully coupled method developed in~\cite{Boffi.Botti.Di-Pietro:16}, which meets all of these requirements. Therein, the elasticity operator is discretized using the Hybrid High-Order method of~\cite{Di-Pietro.Ern:15} (c.f. also~\cite{Botti.Di-Pietro.Sochala:17,Di-Pietro.Drouniou:15}), while the Darcy operator relies on the Symmetric Weighted Interior Penalty method of~\cite{Di-Pietro.Ern.ea:08}.

The contribution of this work is threefold.
First, a probabilistic framework is introduced to study the Biot model with uncertain coefficients. 
Special attention is given to defining an almost surely physically admissible set of stochastic poroelastic parameters by taking into account the dependences between the poroelastic coefficients.
Second, the well-posedness of the stochastic Biot model is proven at the continuous level.
Third, a non-intrusive PC approach is implemented in order to investigate the effect of the random poroelastic coefficients on the displacement and the pressure fields.

The material is organized as follows.
In Section~\ref{sec:model} we present the deterministic poroelasticity model and identify the relations among the poromechanical coefficients allowing to express the constrained specific storage coefficient as a function of the other parameters and primary variables.  
In Section~\ref{sec:Stochastic.Biot} we present the linear poroelasticity problem with random coefficients in strong and weak form. We then give the probabilistic assumptions on the random model coefficients yielding the well-posedness of the stochastic variational problem. 
In Section~\ref{sec:prob.setting} we propose an uncertainty model for the poroelastic coefficients. 
The sparse Pseudo-Spectral Projection (PSP) method is used to compute the PC expansions of the stochastic solutions from a finite set of deterministic problems solved using the method of \cite{Boffi.Botti.Di-Pietro:16}. Finally, we present a complete panel of numerical results in Section~\ref{sec:tests} to illustrate the performance of the method.


\section{The Biot model} \label{sec:model} 
In this section we introduce the model problem and present some preliminary relations among the coefficients characterizing the porous medium.
This linear poroelasticity model, usually referred to as the Biot model, consists of two coupled governing equations, one describing the mass balance of the fluid and the other expressing the mechanical equilibrium of the porous medium. 

\subsection{Fluid mass balance} 
We consider a spatial domain in dimension $d\ge 1$.
The fluid mass conservation law in a \textit{fully saturated} porous medium reads
\begin{equation}\label{eq:Mass_conservation0}
\dt(\rho_{\rm f}\varphi) + \DIV(\rho_{\rm f}\varphi\vv) = \rho_{\rm f}g,
\end{equation}
where $\dt$ denotes the time derivative, $\rho_{\rm f}\ [{\rm kg}/\rm{m}^3]$ is the fluid density, $\varphi\ [-]$ the soil porosity, $\vv\ [\rm{m}\rm{s}^{-1}]$ the velocity field, and $g \ [\rm{s}^{-1}]$ the fluid source. 
The equation of state for \textit{slightly compressible} fluids (cf.~\cite[Section 3.1]{Coussy:04} and~\cite{Zimmerman:00}) is
\begin{equation}\label{eq:density}
\dt\rho_{\rm f}=\frac{\rho_{\rm f}}{\Kf}\dt p,
\end{equation} 
with $p\ [{\rm Pa}]$ the pore pressure and $\Kf>0 \ [{\rm Pa}]$ the fluid bulk modulus. Under the assumptions of \textit{isotropic and isothermal} conditions, \textit{infinitesimal strains}, and \textit{small relative variations of porosity}, following~\cite[Section 4.1]{Coussy:04}, the change in the porosity caused by the fluid pressure and the mechanical displacement field $\vu\ [\rm{m}]$ is 
\begin{equation}\label{eq:porosity}
\dt\varphi=\frac1M\dt p+\alpha\dt(\DIV\vu),
\end{equation}
where $\DIV\vu$ is the divergence of the displacement field. 
We rely on the definitions of the Biot--Willis coefficient $\alpha\in(0,1]\ [-]$ and the Biot tangent modulus $M>0 \ [{\rm Pa}^{-1}]$ given in~\cite{Coussy:04,Detournay.Cheng:93}: 
\begin{equation} \label{eq:Biot_coeffs}
  \alpha\coloneq1-\frac{\Kd}{\Km},
  \qquad
  M\coloneq\frac{\Km}{\alpha-\varphi}, 
\end{equation}
with $\Kd, \Km>0\ [{\rm Pa}]$ denoting the bulk moduli of the drained medium and the solid matrix, respectively.
The coefficient $\alpha$ quantifies the amount of fluid that can be forced into the medium by a variation of the pore volume at a constant fluid pressure, while $M$ measures the amount of fluid that can be forced into the medium by pressure increments due to the compressibility of the structure. The case of a solid matrix with incompressible grains ($\Km\to +\infty$) corresponds to the limit value $\nicefrac1M=0$. From~\eqref{eq:density} and~\eqref{eq:porosity}, it follows that the variation of fluid content in the medium is given by
$$
\dt(\rho_{\rm f}\varphi)= \varphi\dt\rho_{\rm f}+\rho_{\rm f}\dt\varphi 
=\rho_{\rm f}\left(c_0\dt p+\alpha\dt(\DIV\vu)\right),
$$
where, according to~\eqref{eq:density},~\eqref{eq:porosity}, and~\eqref{eq:Biot_coeffs} the constrained specific storage coefficient is defined by 
\begin{equation}
  \label{eq:def_c0}
  c_0\coloneq\frac{\alpha-\varphi}{\Km}+\frac{\varphi}{\Kf}.
\end{equation} 
Using~\eqref{eq:Mass_conservation0} and assuming that the fluid density $\rho_{\rm f}$ is \textit{uniform} in the medium, we obtain
\begin{equation}\label{eq:Mass_conservation1}
c_0\dt p+\alpha\dt(\DIV\vu) + \DIV(\varphi\vv) = g.
\end{equation}

The fluid velocity $\vv$ is related to the pore pressure through the well-known Darcy law (see for instance~\cite[Section 4.1.2]{Detournay.Cheng:93}). Consistent with the small perturbations hypothesis adopted here,  Darcy's law can be considered in its simplest form
$$
\varphi\vv=-\frac{\mathbb{K}}{\mu_{\rm f}}\GRAD p,
$$
where $\mathbb{K}\ [\rm{m}^2]$ is the tensor-valued intrinsic permeability of the medium and $\mu_{\rm f}\ [{\rm Pa}\cdot \rm{s}]$ is the fluid viscosity. Thus, introducing the hydraulic mobility $\vec\kappa\coloneq\nicefrac{\mathbb{K}}{\mu_{\rm f}}$, the mass conservation equation~\eqref{eq:Mass_conservation1} becomes
\begin{equation}\label{eq:Biot1}
c_0\dt p + \alpha\dt(\DIV\vu)- \DIV(\vec\kappa\GRAD p) = g.
\end{equation}
For the sake of simplicity, we also assume in what follows that the mobility field is isotropic and strictly positive, namely $\vec\kappa = \kappa\Id$, with $0<\kappa<+\infty$ and $\Id$ the identity matrix.

\subsection{Momentum balance}
The momentum conservation equation under the \textit{quasi-static} assumption, namely when the inertia effects in the elastic structure are negligible, reads 
\begin{equation}
  \label{eq:momentum.balance}
-\DIV\widetilde\ms=\vf,
\end{equation}
where $\widetilde\ms \ [{\rm Pa}]$ is the total stress tensor and $\vf\ [{\rm Pa}/\rm{m}]$ is the loading force (\eg, gravity). According to Terzaghi's decomposition~\cite{Terzaghi:43}, the stress tensor is modeled as the sum of an effective term (mechanical effect) and a pressure term (fluid effect), \ie,
\begin{equation}
\label{eq:stress.decompose}
\widetilde\ms = \ms(\GRADs\vu)-\alpha p\Id.
\end{equation}
The symmetric part of the gradient of the displacement field $\GRADs\vu$ measures the strain accordingly to the small deformation hypothesis. In the context of \textit{linear isotropic} poroelasticity, the mechanical behavior of the soil is described through the Cauchy strain-stress relation defined, for all deformation tensors $\vec\epsilon\in\Real^{d\times d}_{\rm sym}$, by
\begin{equation}
  \label{eq:lin.Cauchy}
  \ms(\vec{\epsilon})=2\mu\vec\epsilon+\lambda\optr(\vec\epsilon)\Id
  =2\mu \ \vec{\opdev}(\vec\epsilon)+K\optr(\vec\epsilon)\Id,
\end{equation}
where $\mu>0 \ [{\rm Pa}]$ and $\lambda>0 \ [{\rm Pa}]$ are Lam\'e's coefficients, $K\coloneq\nicefrac{2\mu}{d}+\lambda \ [{\rm Pa}]$ is the bulk modulus, and 
$$
\optr(\vec\epsilon)\coloneq \sum_{i=1}^d \epsilon_{ii}, \quad
\vec{\opdev}(\vec\epsilon)\coloneq \vec\epsilon-\frac{\optr(\vec\epsilon)\Id}{d},
$$ 
are the trace and deviator operator, respectively. Incidentally, as noted in~\cite{Bemer.Bouteca:01,Biot:73}, physical and experimental investigations suggest that the mechanical behavior of porous solids may be nonlinear. More general stress-strain relations could be used in place of the linear law~\eqref{eq:lin.Cauchy}, as done in~\cite{Botti.Di-Pietro.Sochala:17,Botti.Di-Pietro.Sochala:18}. 
Inserting~\eqref{eq:lin.Cauchy} and~\eqref{eq:stress.decompose} into~\eqref{eq:momentum.balance}, leads to
\begin{equation}\label{eq:Biot2}
-\DIV\left({2\mu\GRADs\vu + (\lambda\DIV\vu-\alpha p) \Id}\right) = \vf.
\end{equation}

\subsection{Constrained specific storage coefficient}\label{sec:storage_uncertain}
In order to perturb the Biot model coefficients, we need to investigate the dependences between the poroelastic coefficients. 
We propose here to express the specific storage coefficient $c_0$ as a function of other physical parameters.
For that purpose, we introduce the Gassmann equation (cf.~\cite{Gassmann:51,Mehrabian.Abousleiman:15}), that relates the bulk moduli $K$ and $\Kd$ to the Biot--Willis and storage coefficients $\alpha,c_0$:
\begin{equation}
  \label{eq:Gassmann}
  \alpha^2= {c_0}(K -\Kd).
\end{equation}
Using~\eqref{eq:Biot_coeffs} to express the drained bulk modulus $\Kd$ as a function of $\alpha$ and $\Km$, equation~\eqref{eq:Gassmann} gives
\begin{equation}
  \label{eq:lbnd_c0}
  \Km = \frac{K c_0 - \alpha^2}{c_0 (1-\alpha)}.
\end{equation}
Plugging the previous equation into~\eqref{eq:def_c0} and rearranging yields the following quadratic equation for $c_0$:
\begin{equation}
  \label{eq:2order_c0}
  K c_0^2-\left(\alpha+\alpha\varphi+\frac{\varphi K}{\Kf}-\varphi\right)c_0+\frac{\alpha^2\varphi}{\Kf}=0.
\end{equation}
The parameters $\varphi,\Kf,K$, and $\alpha$ can then be used to define $c_0$ through the previous equation. Some conditions need to be prescribed in order to avoid non-physical solutions $c_0$. Owing to the definition of the Biot--Willis coefficient, it holds that $0\le\varphi\le\alpha\le 1$. 
As observed in~\cite[Section 3]{Zimmerman:00}, a stricter lower bound can be imposed, namely
\begin{equation}
  \label{eq:bnd.Zimmerman}
  \frac{3\varphi}{2+\varphi}\le\alpha.
\end{equation}

\begin{lemma}[Existence]
  Let $\Kf,K>0$, and $\varphi,\alpha$ satisfying condition~\eqref{eq:bnd.Zimmerman}. Then, there 
  exists $c_0\in\Real^+$ solution of~\eqref{eq:2order_c0}.
\end{lemma}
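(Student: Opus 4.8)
The plan is to treat \eqref{eq:2order_c0} as a quadratic $a c_0^2 + b c_0 + c = 0$ with $a \coloneq K > 0$, $b \coloneq -(\alpha+\alpha\varphi+\nicefrac{\varphi K}{\Kf}-\varphi)$, and $c \coloneq \nicefrac{\alpha^2\varphi}{\Kf} \ge 0$, and to exhibit a positive root by controlling the signs of $b$ and of the discriminant $\Delta \coloneq b^2 - 4ac$. Since $a>0$ the parabola opens upward, so its larger root $\frac{-b+\sqrt{\Delta}}{2a}$ is strictly positive as soon as $b < 0$ and $\Delta \ge 0$. These are therefore the only two facts I need to establish.

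First I would isolate $\beta \coloneq \alpha + \alpha\varphi - \varphi$, so that $-b = \beta + \nicefrac{\varphi K}{\Kf}$, and record the elementary identity $\beta - \alpha^2 = (1-\alpha)(\alpha - \varphi)$. The inequalities $\varphi \le \alpha \le 1$ recalled just before the statement already make both factors on the right-hand side nonnegative, whence $\beta \ge \alpha^2 > 0$ (note that \eqref{eq:bnd.Zimmerman} is a fortiori sufficient, since $\nicefrac{3\varphi}{2+\varphi} \ge \varphi$ for $\varphi \le 1$ forces $\varphi \le \alpha$). This immediately gives $-b = \beta + \nicefrac{\varphi K}{\Kf} > 0$, i.e.\ $b < 0$, which is the first required fact.

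The substance of the argument is the sign of $\Delta$. Writing $t \coloneq \nicefrac{K}{\Kf} \ge 0$ and expanding, I would rewrite $\Delta$ as the quadratic $\varphi^2 t^2 + 2\varphi(\beta - 2\alpha^2)\,t + \beta^2$ in the single variable $t$, whose leading coefficient $\varphi^2$ is nonnegative. I then show this expression stays nonnegative for every $t \ge 0$ by computing its own discriminant in $t$, which collapses to $16\varphi^2\alpha^2(\alpha^2 - \beta)$. Since $\beta \ge \alpha^2$ was just established, this quantity is $\le 0$, so the quadratic in $t$ never changes sign and hence $\Delta \ge 0$ (the degenerate case $\varphi = 0$ gives $\Delta = \beta^2 \ge 0$ directly). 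Combining $b < 0$ with $\Delta \ge 0$ yields the sought $c_0 \in \Real^+$.

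I expect the discriminant step to be the only delicate point, and the key trick is to regard $\Delta$ as a quadratic in $t = \nicefrac{K}{\Kf}$ rather than in the original physical parameters: this reduces the whole positivity question to the single inequality $\beta \ge \alpha^2$, which is precisely the algebraic content of the bounds on $\varphi$ and $\alpha$ supplied by \eqref{eq:bnd.Zimmerman}. Everything else is routine algebraic rearrangement.
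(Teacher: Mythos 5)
Your proof is correct and follows essentially the same route as the paper: both reduce the nonnegativity of the discriminant of the quadratic in $c_0$ to the single inequality $(1-\alpha)(\alpha-\varphi)\ge 0$, your ``discriminant of a quadratic in $t=\nicefrac{K}{\Kf}$'' computation being an equivalent repackaging of the paper's completion of the square in $\varphi(\nicefrac{K}{\Kf}-1)$. The one small bonus in your write-up is the explicit check that $b<0$ makes the larger root strictly positive, a point the paper's proof leaves implicit (it only establishes that real roots exist and defers the choice of the admissible root to the subsequent remark).
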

\begin{proof}
We prove existence by assessing the positivity of the discriminant $\mathcal{D}$ associated to~\eqref{eq:2order_c0}. Computing $\mathcal{D}$ and rearranging, leads to
$$
\begin{aligned}
 \mathcal{D} &= \left(\alpha+\alpha\varphi+\frac{\varphi K}{\Kf}-\varphi\right)^2 - \frac{4\alpha^2\varphi K}{\Kf}
 \\ 
 &=\varphi^2\left(\nicefrac{K}{\Kf}-1\right)^2 
 + 2\alpha\varphi\left(1+\varphi-2\alpha\right)\left(\nicefrac{K}{\Kf} -1\right) + \alpha^2(1-\varphi)^2
 \\
 &=\left[\varphi\left(\nicefrac{K}{\Kf}-1\right) + \alpha(1+\varphi-2\alpha)\right]^2 
 + \alpha^2(1-\varphi)^2-\alpha^2(1+\varphi-2\alpha)^2
 \\
 &=\left[\varphi\left(\nicefrac{K}{\Kf}-1\right) + \alpha(1+\varphi-2\alpha)\right]^2 
 + 4\alpha^2(1-\alpha)(\alpha-\varphi).
\end{aligned}
$$
Since $0\le\varphi<\alpha\le1$ owing to~\eqref{eq:bnd.Zimmerman}, the second term in the previous sum is positive and, as a result, $\mathcal{D}\ge0$.
\end{proof}
\begin{remark}
The previous lemma yields the existence of two real solutions $c_0^-\le c_0^+$. We consider $c_0^+$ as the unique solution to~\eqref{eq:2order_c0} because, for admissible values of $(K,\Kf,\varphi,\alpha)$, we might have $c_0^-< \nicefrac{\varphi}{\Kf}$, violating~\eqref{eq:def_c0}. For instance, this is the case if $\alpha=\nicefrac{2\varphi}{(1+\varphi)}$ for any $\phi\le \nicefrac34$ and $K,\Kf>0$. 
\end{remark}


\section{The Biot problem with random coefficients}\label{sec:Stochastic.Biot}

In this section, we introduce the extension of the linear poroelasticity problem in the case of uncertain poroelastic coefficients. We discuss the assumptions on the random coefficients, derive a weak formulation of the resulting stochastic problem, and prove its well-posedness.

\subsection{Strong and weak formulations}\label{sec:weak_form}
Let $n\in\Natural$ and denote by $X\subset\Real^n$ a measurable set. Spaces of functions, vector fields, and tensor fields defined over $X$ are respectively denoted by italic capital, boldface Roman capital, and special Roman capital letters. Thus, for example, $L^2(X)$, $\Lvec{X}$, $\Lmat{X}$ denote the spaces of square integrable functions, vector fields, and tensor fields over $X$, respectively.
For any $m\in\Natural$, we denote by $H^m(X)$ the usual Sobolev space of functions that have weak partial derivatives of order up to $m$ in $L^2(X)$, with the convention that $H^0(X)\coloneq L^2(X)$, while $C^m(X)$ and $C_{\rm c}^\infty(X)$ denote, respectively, the usual spaces of $m$-times continuously differentiable functions and infinitely continuously differentiable functions with compact support on $X$. 

We introduce an abstract probability space $(\Theta,\mathcal{B},\mathcal{P})$, where $\Theta$ is the set of possible outcomes, $\mathcal{B}$ a $\sigma$-algebra of events, and $\mathcal{P}:\mathcal{B}\to[0,1]$ a probability measure. For any random variable $h:\Theta\to\Real$ defined on the abstract probability space, the expectation of $h$ is 
$$
\mathbb{E}(h)\coloneq\int_\Theta h(\theta) \ud\mathcal{P}(\theta).
$$ 
We assume hereafter that all random quantities are second-order ones, namely they belong to 
$$
L^2_\mathcal{P}(\Theta)\coloneq \left\{ h:\Theta\to\Real \st \mathbb{E}(h^2)<+\infty\right\}.
$$

The Biot problem consists in finding a vector-valued displacement field $\vu$ and a scalar-valued pressure field $p$ solutions of~\eqref{eq:Biot1} and~\eqref{eq:Biot2}. We now consider the stochastic version corresponding to the case of random poroelastic coefficients. We assume that $D\subset\mathbb{R}^d$, $d\in\{2,3\}$, is a (deterministic) bounded connected polyhedral domain with boundary $\partial D$ and outward normal $\normal$. 
In order to close the problem, we enforce boundary conditions corresponding to a medium that is clamped on $\Gamma_D\subset\partial D$, traction-free on $\Gamma_N\coloneq\partial{D}\setminus\Gamma_D$, permeable with free drainage on $\Gamma_d\subset\partial D$, and impermeable on $\Gamma_n\coloneq\partial{D}\setminus\Gamma_d$, as well as a deterministic initial condition prescribing the initial fluid content $\phi_0$. For a given finite time $t_F>0$, the resulting problem is given by
\begin{subequations}\label{eq:SysBiot}
\begin{alignat}{7}
    \label{eq:SysBiot1}
    -\DIV\ms(\vec{x},\theta,t)
    +\GRAD(\alpha(\vec{x},\theta) p(\vec{x},\theta,t)) &=\vf(\vec{x},t), 
    &\qquad& (\vec{x},\theta,t)\in D\times\Theta\times (0,t_F],
    \\
    \label{eq:SysBiot2}
    \dt \phi(\vec{x},\theta,t) - \DIV(\vec\kappa(\vec{x},\theta)\GRAD p(\vec{x},\theta,t)) &= g(\vec{x},t), 
    &\qquad& (\vec{x},\theta,t)\in D\times\Theta\times (0,t_F],
    \\
    \label{eq:SysBiot:bcD.u}
    \vu(\vec{x},\theta,t) &= \vec{0}, &\qquad&(\vec{x},\theta,t)\in \Gamma_{D}\times\Theta\times (0,t_F],
    \\
    \label{eq:SysBiot:bcN.u}
    \ms(\vec{x},\theta,t)\normal+\alpha(\vec{x},\theta) p(\vec{x},\theta,t))\normal, 
    &= \vec{0} &\qquad& (\vec{x},\theta,t)\in \Gamma_{N}\times\Theta\times (0,t_F],
    \\
    \label{eq:SysBiot:bcD.p}
    p(\vec{x},\theta,t) &= 0, &\qquad&(\vec{x},\theta,t)\in \Gamma_{d}\times\Theta\times (0,t_F],
    \\
    \label{eq:SysBiot:bcN.p}
    \vec\kappa(\vec{x},\theta)\GRAD p(\vec{x},\theta,t)\cdot\normal &= 0, 
    &\qquad& (\vec{x},\theta,t)\in \Gamma_{n}\times\Theta\times (0,t_F],
    \\
    \label{eq:SysBiot:initial}
    \phi(\vec{x},\theta,0)&=\phi_0(\vec{x}), &\qquad& (\vec{x},\theta)\in D\times\Theta,
\end{alignat}
where the stress tensor in~\eqref{eq:SysBiot1} and~\eqref{eq:SysBiot:bcN.u}, and the fluid content in~\eqref{eq:SysBiot2} and~\eqref{eq:SysBiot:initial}, are defined, for all $(\vec{x},\theta,t)\in D\times\Theta\times (0,t_F]$, such that
\begin{align}
    \label{eq:SysBiot:stress}
    \ms(\vec{x},\theta,t) &= 2\mu(\vec{x},\theta)\GRADs\vu(\vec{x},\theta,t)
    +\lambda(\vec{x},\theta)(\DIV\vu(\vec{x},\theta,t))\Id,
    \\
    \label{eq:SysBiot:fluid_content}
    \phi(\vec{x},\theta,t) &= 
    c_0(\vec{x},\theta) p(\vec{x},\theta,t) + \alpha(\vec{x},\theta)\DIV\vu(\vec{x},\theta,t).
\end{align}
If $\Gamma_N=\Gamma_d=\emptyset$ and $c_0=0$, owing to~\eqref{eq:SysBiot2} and the homogeneous Neumann condition~\eqref{eq:SysBiot:bcN.p}, we need the following compatibility conditions on $g$ and $\phi_0$ and zero-average constraint on $p$:
\begin{equation}\label{eq:compatibility2}
  \int_D \phi_0(\cdot) = 0, \qquad\quad\; \int_D g(\cdot,t) = 0,\quad\text{ and }\quad\int_D p(\cdot,\cdot,t) = 0
  \qquad\forall t\in(0,t_F).
\end{equation}
\end{subequations}
For the sake of simplicity, we do not consider the case of $\Gamma_D$ with zero $(d-1)$-dimensional Hausdorff measure. This situation simply requires an additional compatibility condition on $\vf$ and the prescription of the rigid-body motions of the medium. We remark that inhomogeneous and possibly random boundary conditions, as well as random loading, can also be considered up to minor modifications.

Before giving the variational formulation of problem~\eqref{eq:SysBiot}, we introduce some notations. For a given vector space $V(D)$ of real-valued functions on $D$, equipped with the norm $\norm[V(D)]{\cdot}$, we define the Bochner space of second-order random fields by
$$
L^{2}_\mathcal{P}(\Theta;V(D))\coloneq \left\{v:D\times\Theta\to\Real\st
\norm[L^{2}_\mathcal{P}(\Theta;V(D))]{v}\coloneq\mathbb{E}\left(\norm[V(D)]{v}^2\right)^{\frac12}<+\infty\right\}.
$$
Since the Biot problem is unsteady, we also need to introduce, for a given vector space $W$, the Bochner space $L^{2}((0,\tF);W)$ of square integrable $W$-valued functions of the time interval $(0,\tF)$. Similarly, the Sobolev space $H^{1}((0,\tF);W)\subset L^{2}((0,\tF);W)$ is spanned by $W$-valued functions having square integrable first-order weak derivative.
In what follows, for all $(\vec{x},\theta,t)\in D\times\Theta\times(0,t_F]$, $v\in L^{2}_\mathcal{P}(\Theta;V(D))$, and 
$w\in L^{2}((0,\tF);L^{2}_\mathcal{P}(\Theta;V(D)))$, we use $v(\vec{x},\theta)$ and $w(\vec{x},\theta,t)$ as shorthand notations for $(v(\theta))(\vec{x})$ and $(w(t))(\vec{x},\theta)$, respectively. 

At each time $t\in (0,\tF]$, the natural functional spaces for the random displacement field $\vu(t):D\times\Theta\to\Real^d$ and pressure field $p(t):D\times\Theta\to\Real$ taking into account the boundary conditions~\eqref{eq:SysBiot:bcD.u}--\eqref{eq:SysBiot:bcN.p} are, respectively, 
$$
  \begin{aligned}
  \vec{U}&\coloneq L^2_\mathcal{P}(\Theta; \HvecGamD{D}),
  \\
  P&\coloneq\begin{cases}
  L^2_\mathcal{P}(\Theta; {H}^1(D)\cap L_0^2(D))  & 
  \quad\text{if $\Gamma_d=\Gamma_N=\emptyset$ and $c_0=0$,} \\
  L^2_\mathcal{P}(\Theta; {H}^1_{0,\Gamma_d}(D)) & \quad\text{otherwise,} 
  \end{cases}
  \end{aligned}
$$
with $\,\HvecGamD{D}\coloneq\left\{\vv\in\Hvec{D}:{\vv}_{|\Gamma_D}=\vec0\right\}$, 
$\,{H}^1_{0,\Gamma_d}(D)\coloneq\left\{q\in H^1(D): q_{|\Gamma_d}=0\right\}\,$, and
$$
L_0^2(D)\coloneq\left\{q\in L^2(D):\int_D q=0\right\}.
$$
We consider the following weak formulation of problem~\eqref{eq:SysBiot}:
For a loading $\vf\in L^2((0,\tF);\Lvec{D})$, a fluid source $g\in L^2((0,\tF);L^2(D))$, and an initial datum $\phi_0\in L^2(D)$ that verify~\eqref{eq:compatibility2}, find $\vu\in L^2((0,\tF);\vec{U})$ and $p\in L^2((0,\tF);P)$ such that, for all $\vv\in\vec{U}$, all $q\in P$, and all $\psi\in C_{\rm c}^\infty((0,\tF))$
\begin{subequations}
  \label{eq:weak_form}
  \begin{align}
    \label{eq:weak_form.mech}
    \int_0^{\tF}a(\vu(t),\vv)\ \psi(t) \ud t+\int_0^{\tF}\hspace{-1mm}b(\vv,p(t))\ \psi(t)\ud t
    &=\int_0^{\tF}\hspace{-2mm}\mathbb{E}\left((\vf(t),\vv)_D\right)\psi(t)\ud t,
    \\
    \label{eq:weak_form.fluid}
    \hspace{-1mm}\int_0^{\tF}\hspace{-1mm}\left[b(\vu(t),q)-c(p(t),q)\right] \dt\psi(t) \ud t 
    +\int_0^{\tF}\hspace{-1mm}d(p(t),q)\ \psi(t) \ud t
    &=\int_0^{\tF}\hspace{-2mm}\mathbb{E}\left( (g(t),q)_D\right) \psi(t) \ud t, 
    \\
    \label{eq:weak_form.initial}
    c(p(0),q)-b(\vu(0),q)&= \mathbb{E}\left((\phi_0,q)_D \right),
  \end{align}
\end{subequations}
where $(\cdot,\cdot)_D$ denotes the usual inner product in $L^2(D)$ and the bilinear forms $a:\vec{U}\times\vec{U}\to\Real$, $b:\vec{U}\times P\to\Real$, and $c:P\times P\to\Real$ are defined such that, for all $\vv,\vw\in\vec{U}$ and all $q,r\in P$,
$$
\begin{aligned}
  a(\vv,\vw)&\coloneq \mathbb{E}\left(\int_D 
  \left(2\mu(\vec{x},\cdot)\GRADs\vv(\vec{x},\cdot):\GRADs\vw(\vec{x},\cdot) +
  \lambda(\vec{x},\cdot)\DIV\vv(\vec{x},\cdot)\DIV\vw(\vec{x},\cdot)\right) \ud\vec{x}\right),
  \\
  b(\vv,q)&\coloneq \mathbb{E}\left(-\int_D \alpha(\vec{x},\cdot)\DIV\vv(\vec{x},\cdot) q(\vec{x},\cdot)\ 
  \ud\vec{x}\right),
  \\
  c(q,r)&\coloneq \mathbb{E}\left(\int_D c_0(\vec{x},\cdot) q(\vec{x},\cdot) r(\vec{x},\cdot)\ 
  \ud\vec{x}\right),
  \\
  d(q,r)&\coloneq \mathbb{E}\left(\int_D\kappa(\vec{x},\cdot)\GRAD r(\vec{x},\cdot)\cdot\GRAD q(\vec{x},\cdot)\ 
  \ud\vec{x}\right).
\end{aligned}
$$
Above, we have introduced the Frobenius product such that, for all $\matr{\tau},\matr{\eta}\in\Real^{d\times d}$, $\matr{\tau}:\matr{\eta}\coloneq\sum_{1\le i,j\le d}\tau_{ij}\eta_{ij}$ with corresponding norm such that, for all $\matr{\tau}\in\Real^{d\times d}$, $\seminorm[d\times d]{\matr{\tau}}\coloneq(\matr{\tau}:\matr{\tau})^{\nicefrac12}$.

\subsection{Well-posedness}\label{sec:wf_well-posedness}

The aim of this section is to infer a stability estimate on the displacement and pressure $(\vu,p)\in L^2((0,\tF);\vec{U})\times L^2((0,\tF);P)$ solving~\eqref{eq:weak_form} yielding, in particular, the well-posedness of the weak problem.
The existence and uniqueness of a solution to the deterministic Biot problem has been studied in \cite{Showalter:00,Zenisek:84}. The results therein establish the existence, for almost every (a.e.) $\theta\in\Theta$, of a solution $(\vu(\theta),p(\theta))$ to~\eqref{eq:SysBiot}. Following the argument of \cite[Theorem 2.2]{Charrier:13}, we note that the mapping $(\vu,p):\Theta\to L^2((0,\tF); \HvecGamD{D})\times L^2((0,\tF); H^1(D))$ is measurable because it is a continuous function of the input coefficients $\mu,\lambda,\alpha,c_0,\kappa\in L^2_\mathcal{P}(\Theta)$.
Therefore, proving that $\theta\mapsto(\vu(\theta),p(\theta))$ is bounded in $L^2_\mathcal{P}(\Theta)$ gives the existence of a solution to~\eqref{eq:weak_form}. This result is established by Proposition \ref{pro:aprori}. 

We assume some additional conditions on the input random data that will be needed in the proofs. First, we recall that, for all $\vec{x}\in D$ and all $\theta\in\Theta$, the coupling coefficient $\alpha(\vec{x},\theta)\in (0,1]$ satisfies the lower bound~\eqref{eq:bnd.Zimmerman}. We assume that the reference porosity of the medium is strictly positive (otherwise the Biot problem would reduce to a decoupled one), so that we have $0<\underline\alpha\coloneq\frac{3\varphi}{2+\varphi}<\alpha(\vec{x},\theta)$. We also assume that Lam\'e's  coefficients satisfy the following:
\begin{assumption}[Elastic moduli]\label{ass:shear_mod}
  The shear modulus $\mu\in L^\infty(D\times\Theta)$ is uniformly bounded away from zero, \ie~there exists a positive constant $\underline\mu$ such that 
  \begin{equation}\label{eq:ass.shear_mod}
    0<\underline\mu \le\mu(\vec{x},\theta), \qquad {\rm a.e.} \,\text{ in } D\times\Theta.
  \end{equation}
  The bulk modulus $K\in L^\infty(D\times\Theta)$ is uniformly bounded from above, \ie~there exists a constant $\overline{K}$ such that
  \begin{equation}\label{eq:ass.dilat_mod}
  0<\frac{2\mu(\vec{x},\theta)+d\lambda(\vec{x},\theta)}{d}=K(\vec{x},\theta)\le\overline{K}<+\infty, 
  \qquad {\rm a.e.} \,\text{ in } D\times\Theta.
  \end{equation}
\end{assumption}
The next result establishes the coercivity and the inf-sup condition of the bilinear forms $a$ and $b$ in~\eqref{eq:weak_form.mech}.
\begin{lemma}[Coercivity and inf-sup]\label{lem:coer_infsup}
  Under Assumption~\ref{ass:shear_mod}, the following bounds hold:
  \begin{align}
      \label{eq:coercivity_a}
     a(\vv,\vv)&\ge 2\underline\mu \ C_{\rm K}^{-1}\norm[\vec{U}]{\vv}^2, \;\qquad\qquad\forall\vv\in\vec{U},
     \\
     \label{eq:inf-sup_b}
     \sup_{\vec0\neq\vv\in\vec{U}} \frac{b(\vv,q)}{\norm[\vec{U}]{\vv}}&\ge 
     \underline\alpha \ C_{\rm is} 
     \norm[L^2_\mathcal{P}(\Theta; L^2(D))]{q}, \qquad\forall q\in L^2_\mathcal{P}(\Theta; L^2(D)),
  \end{align}
  where $C_{\rm K}>0$ denotes the constant in Korn's first inequality, $C_{\rm is}>0$ the inf-sup constant, and 
  $\norm[\vec{U}]{\cdot}\coloneq\norm[L^2_\mathcal{P}(\Theta;{\HvecGamD{D}})]{\cdot}$. 
  In the case $\Gamma_D=\partial D$,~\eqref{eq:inf-sup_b} holds for all $q\in L^2_\mathcal{P}(\Theta;L_0^2(D))$.
\end{lemma}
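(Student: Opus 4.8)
The two bounds can be proved independently, and in each case the strategy is to reduce the stochastic estimate to its deterministic analogue by working $\theta$-wise under the expectation and exploiting the uniform bounds on the coefficients together with the tensorised structure of the Bochner norm $\norm[\vec{U}]{\cdot}^2=\mathbb{E}(\norm[\HvecGamD{D}]{\cdot}^2)$.

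For the coercivity~\eqref{eq:coercivity_a}, I would start from the definition of $a$. Since $\lambda>0$ a.e.\ in $D\times\Theta$, the term $\lambda(\DIV\vv)^2$ is nonnegative and can be dropped in a lower bound; invoking then the uniform lower bound $\mu\ge\underline\mu$ of~\eqref{eq:ass.shear_mod} gives
$$
a(\vv,\vv)\ge 2\underline\mu\,\mathbb{E}\!\left(\norm[\Lmat{D}]{\GRADs\vv}^2\right).
$$
Applying Korn's first inequality $\norm[\Lmat{D}]{\GRADs\vv}^2\ge C_{\rm K}^{-1}\norm[\HvecGamD{D}]{\vv}^2$ pointwise in $\theta$ and taking the expectation yields~\eqref{eq:coercivity_a}. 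No measurability question arises, every step being performed inside the expectation.

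For the inf-sup condition~\eqref{eq:inf-sup_b}, the plan is to exhibit an explicit competitor in the supremum. The cornerstone is the classical surjectivity of the divergence operator: there is a bounded linear right inverse $\mathcal{R}:L^2(D)\to\HvecGamD{D}$ (respectively $L_0^2(D)\to\HvecGamD{D}$ when $\Gamma_D=\partial D$) with $\DIV(\mathcal{R}r)=r$ and $\norm[\HvecGamD{D}]{\mathcal{R}r}\le C_{\rm is}^{-1}\norm[L^2(D)]{r}$ for every admissible $r$, where $C_{\rm is}$ is precisely the deterministic inf-sup constant. Given $q\in L^2_\mathcal{P}(\Theta;L^2(D))$, I would set $\vv(\cdot,\theta)\coloneq-\mathcal{R}(q(\cdot,\theta))$ for a.e.\ $\theta$, so that $\DIV\vv=-q$. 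The lower bound $\alpha\ge\underline\alpha$ then gives
$$
b(\vv,q)=\mathbb{E}\!\left(\int_D\alpha\,q^2\,\ud\vec{x}\right)\ge\underline\alpha\,\norm[L^2_\mathcal{P}(\Theta;L^2(D))]{q}^2,
$$
while squaring the operator bound and taking the expectation yields $\norm[\vec{U}]{\vv}\le C_{\rm is}^{-1}\norm[L^2_\mathcal{P}(\Theta;L^2(D))]{q}$. Bounding the supremum from below by the quotient $b(\vv,q)/\norm[\vec{U}]{\vv}$ and combining the two displays produces~\eqref{eq:inf-sup_b}. The restriction to mean-zero $q$ when $\Gamma_D=\partial D$ is exactly the range condition needed for $\mathcal{R}q$ to be defined, since $\DIV$ maps $\HvecGamD{D}$ onto $L_0^2(D)$ in that case.

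The only genuinely delicate point, and the one I expect to require attention, is verifying that the competitor $\vv=-\mathcal{R}q$ indeed belongs to $\vec{U}=L^2_\mathcal{P}(\Theta;\HvecGamD{D})$, that is, that $\theta\mapsto\mathcal{R}(q(\cdot,\theta))$ is strongly measurable as an $\HvecGamD{D}$-valued map and square integrable. This follows because $\mathcal{R}$ is a fixed bounded, hence continuous, linear operator between Hilbert spaces: composing it with the measurable map $\theta\mapsto q(\cdot,\theta)$ preserves measurability, and the norm bound $\norm[\HvecGamD{D}]{\mathcal{R}(q(\cdot,\theta))}\le C_{\rm is}^{-1}\norm[L^2(D)]{q(\cdot,\theta)}$ transfers the square integrability in $\theta$ from $q$ to $\vv$. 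Once this membership is secured, $\vv$ is an admissible element of the supremum and the inf-sup estimate is complete.
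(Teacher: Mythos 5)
Your proof is correct and follows essentially the same route as the paper: the coercivity bound is obtained by dropping the nonnegative $\lambda$-term and applying Korn's first inequality under the expectation, and the inf-sup bound is obtained by testing with a divergence right-inverse of $q$ (up to sign). The only difference is that the paper invokes \cite[Lemma 7.2]{Bespalov.Powell.ea:12} to get the right inverse directly in the Bochner space $\vec{U}$, whereas you reconstruct it $\theta$-wise from the deterministic operator and verify strong measurability and square integrability yourself --- which is precisely the content of that cited lemma.
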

\begin{proof}
The first bound directly follows from Assumption~\ref{ass:shear_mod}. Indeed, since $|\Gamma_D|_{d-1} >0$, we can apply Korn's first inequality, yielding
$$
2\underline\mu\norm[L^2_\mathcal{P}(\Theta; \Lmat{D})]{\GRAD\vv}^2
\le 2\underline\mu C_{\rm K} \mathbb{E}\left(\norm[\Lmat{D}]{\GRADs\vv}^2 \right)
\le C_{\rm K} a(\vv,\vv), \quad\;\forall\vv\in\vec{U}.
$$
To obtain~\eqref{eq:inf-sup_b}, we use \cite[Lemma 7.2]{Bespalov.Powell.ea:12} establishing the existence, for any $q\in L^2_\mathcal{P}(\Theta; L^2(D))$ 
(or $q\in L^2_\mathcal{P}(\Theta; L_0^2(D))$ in the case $\Gamma_d=\partial D$), of ${\vv}_q\in \vec{U}$ such that $\DIV{\vv}_q = q$ and $C_{\rm is}\norm[\vec{U}]{{\vv}_q}\le \norm[L^2_\mathcal{P}(\Theta; L^2(D))]{q}$, with $C_{\rm is}>0$ depending on $D$. Thus, we conclude 
$$
\sup_{\vec0\neq\vv\in\vec{U}} \frac{b(\vv,q)}{\norm[\vec{U}]{\vv}}\ge \frac{-b({\vv}_q,q)}{\norm[\vec{U}]{{\vv}_q}}
\ge \frac{\underline\alpha \norm[L^2_\mathcal{P}(\Theta; L^2(D))]{q}^2}{\norm[\vec{U}]{{\vv}_q}}
\ge \underline\alpha\ C_{\rm is}\norm[L^2_\mathcal{P}(\Theta; L^2(D))]{q}.
$$
\end{proof}

In order to prove the stability estimate of Proposition~\ref{pro:aprori} below, we infer from~\eqref{eq:ass.dilat_mod} a lower bound for the specific storage coefficient $c_0$. We rewrite~\eqref{eq:lbnd_c0}, derived from the definition of $\alpha$ in~\eqref{eq:Biot_coeffs} and the Gassmann equation~\eqref{eq:Gassmann}, in the form
$$
\frac{(1-\alpha)(\Km - K)}{\alpha} = K - \frac{\alpha}{c_0}. 
$$
Since the left-hand side of the previous relation is always non-negative (according to \cite[Chapter 4]{Coussy:04} we have $\Kd\le K\le\Km$), we infer that $\nicefrac\alpha{c_0}\le K$ and, as a result,
\begin{equation}
  \label{eq:bndc0}
c_0^{-1}(\vec{x},\theta)\le \frac{K(\vec{x},\theta)}{\alpha(\vec{x},\theta)}\le
\frac{\overline{K}}{\underline\alpha}
\qquad {\rm a.e.} \,\text{ in } D\times\Theta.
\end{equation}
\begin{proposition}[A priori estimate]\label{pro:aprori}
  Let $(\vu,p)\in L^2((0,\tF);\vec{U}\times P)$ solve~\eqref{eq:weak_form}. Then, under Assumption~\ref{ass:shear_mod}, 
  it holds
  \begin{equation}\label{eq:aprori}
  \int_0^{\tF} \left[a(\vu(t),\vu(t)) + c(p(t),p(t))\right]\ud{t}
  \le \int_0^{\tF} \left(\frac{C_{\rm K}}{2\underline\mu}\norm[\Lvec{D}]{\vf}^2
  +\frac{\overline{K}}{\underline\alpha}\norm[L^2(D)]{G}^2 \right) \ud{t}, 
  \end{equation}
  with $G:(0,\tF)\to L^2(D)$ defined by 
  \begin{equation}\label{eq:defGrhs}
    G(t)\coloneq \int_0^t g(s)\ud{s} + \phi_0.
  \end{equation}
\end{proposition}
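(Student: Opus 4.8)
The plan is to test each equation of~\eqref{eq:weak_form} against the solution itself and add the resulting identities, after first recasting the fluid balance in time-integrated form so that its time derivative becomes a sign-definite quantity; the right-hand side is then handled by Cauchy--Schwarz, the coercivity~\eqref{eq:coercivity_a}, the storage lower bound~\eqref{eq:bndc0}, and Young's inequality. To begin, I would localize~\eqref{eq:weak_form.mech} in time: since it holds for every $\psi\in C_{\rm c}^\infty((0,\tF))$ and every fixed $\vv$, for a.e.\ $t\in(0,\tF)$ one has $a(\vu(t),\vv)+b(\vv,p(t))=\mathbb{E}((\vf(t),\vv)_D)$ for all $\vv\in\vec{U}$, and the choice $\vv=\vu(t)$ gives the pointwise identity $a(\vu(t),\vu(t))+b(\vu(t),p(t))=\mathbb{E}((\vf(t),\vu(t))_D)$.

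The crux is the fluid equation~\eqref{eq:weak_form.fluid}. Setting $\phi\coloneq c_0 p+\alpha\DIV\vu$, one checks that $b(\vu(t),q)-c(p(t),q)=-\mathbb{E}((\phi(t),q)_D)$, so~\eqref{eq:weak_form.fluid} says that, for each fixed $q\in P$, the map $t\mapsto\mathbb{E}((\phi(t),q)_D)$ has distributional derivative $\mathbb{E}((g(t),q)_D)-d(p(t),q)\in L^2((0,\tF))$ and is hence absolutely continuous. Integrating on $(0,t)$, using the initial condition~\eqref{eq:weak_form.initial} as $\mathbb{E}((\phi(0),q)_D)=\mathbb{E}((\phi_0,q)_D)$, the definition~\eqref{eq:defGrhs} of $G$, and $\int_0^t d(p(s),q)\,\ud s=d(\Phi(t),q)$ with $\Phi(t)\coloneq\int_0^t p(s)\,\ud s$, I obtain the integrated identity $\mathbb{E}((\phi(t),q)_D)+d(\Phi(t),q)=\mathbb{E}((G(t),q)_D)$ for all $q\in P$ and a.e.\ $t$. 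Choosing $q=p(t)$ and using $\mathbb{E}((\phi(t),p(t))_D)=c(p(t),p(t))-b(\vu(t),p(t))$ together with $\dt\Phi=p$, so that $d(\Phi(t),p(t))=\tfrac12\dt\big(d(\Phi(t),\Phi(t))\big)$, yields the companion pointwise identity.

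Adding the two pointwise identities cancels the coupling terms $\pm b(\vu(t),p(t))$ and leaves $a(\vu(t),\vu(t))+c(p(t),p(t))+\tfrac12\dt\big(d(\Phi(t),\Phi(t))\big)=\mathbb{E}((\vf(t),\vu(t))_D)+\mathbb{E}((G(t),p(t))_D)$. Integrating over $(0,\tF)$, using $\Phi(0)=0$ and the positivity of $d$ to discard $\tfrac12 d(\Phi(\tF),\Phi(\tF))\ge0$, gives $\int_0^{\tF}[a(\vu,\vu)+c(p,p)]\,\ud t\le\int_0^{\tF}[\mathbb{E}((\vf,\vu)_D)+\mathbb{E}((G,p)_D)]\,\ud t$. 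On the right-hand side I would estimate the load term by Cauchy--Schwarz combined with~\eqref{eq:coercivity_a} and Young's inequality, absorbing $\tfrac12 a(\vu,\vu)$, and the source term by the weighted Cauchy--Schwarz inequality $\mathbb{E}((G,p)_D)\le\mathbb{E}\big(\int_D c_0^{-1}G^2\big)^{1/2}c(p,p)^{1/2}$, invoking~\eqref{eq:bndc0} to bound $c_0^{-1}$ by $\overline{K}/\underline\alpha$ and Young's inequality to absorb $\tfrac12 c(p,p)$. Transferring the absorbed halves to the left and multiplying by two then delivers the stated constants $C_{\rm K}/(2\underline\mu)$ and $\overline{K}/\underline\alpha$.

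The main obstacle is the rigorous treatment of the time derivative in the fluid balance: since $c_0$ may degenerate and $\phi$ and $p$ do not belong to the same space, the pairing $\langle\dt\phi(t),p(t)\rangle$ and the identity $\tfrac12\dt(c(p,p))=c(\dt p,p)$ that would be needed to test the differential form directly with $p(t)$ are not justified. Integrating the equation in time beforehand circumvents this, because the resulting identity involves only $L^2$-in-time quantities and the primitive $\Phi$; this same step both produces the datum $G$ and turns the time derivative into the nonnegative endpoint term $\tfrac12 d(\Phi(\tF),\Phi(\tF))$. It remains to note that substituting the time-dependent $q=p(t)$ into the integrated identity is legitimate, as that identity holds for every fixed $q\in P$ while $p(t)\in P$ for a.e.\ $t$.
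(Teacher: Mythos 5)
Your argument is correct and follows essentially the same route as the paper's proof: test the mechanical equation with $\vu(t)$, integrate the fluid equation in time, test with $p(t)$ using the primitive $\Phi$ of $p$ (the paper's $z$) to turn the Darcy term into a nonnegative endpoint contribution, cancel the coupling terms, and close with Jensen, Cauchy--Schwarz, the bounds \eqref{eq:coercivity_a} and \eqref{eq:bndc0}, and Young's inequality. The only cosmetic difference is that you justify the time regularity of $t\mapsto b(\vu(t),q)-c(p(t),q)$ directly from the weak formulation (its distributional derivative lies in $L^2((0,\tF))$), whereas the paper invokes external references for the same fact.
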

\begin{proof} 
Since the loading $\vf$ is deterministic, from~\eqref{eq:weak_form.mech} we infer that 
$a(\vu(t),\vv) + b(\vv,p(t)) = \left(\vf(t),\mathbb{E}(\vv)\right)_D$, for ${\rm a.e.}\;t\in(0,\tF)$ and all $\vv\in\vec{U}$. Setting $\vv(\vec{x},\theta)=\vu(\vec{x},\theta,t)$ and integrating the previous relation on $(0,\tF)$, yields
\begin{equation}
  \label{eq:energy_balance1}
  \int_0^{\tF} a(\vu(t),\vu(t)) \ud{t} + \int_0^{\tF} b(\vu(t),p(t)) \ud{t} = 
  \int_0^{\tF}\left(\vf(t),\mathbb{E}(\vu(t))\right)_D \ud{t}.
\end{equation}
Adapting the argument of \cite[Chapter 4]{Botti_PhD:18} and \cite[Remark 2]{Botti.Di-Pietro.Sochala:18}, we infer that $b(\vu(t),q)-c(p(t),q)\in H^1((0,\tF))$ for all $q\in P$. 
Thus, we can integrate by parts~\eqref{eq:weak_form.fluid} and obtain 
$$
\dt\left[c(p(s),q)-b(\vu(s),q)\right] +d(p(s),q) =\left(g(s),\mathbb{E}(q)\right)_D, 
\quad \text{for }{\rm a.e.}\;s\in(0,\tF).
$$
Letting $t\in(0,\tF)$, integrating the previous identity on $(0,t)$, and then taking $q(\vec{x},\theta)=p(\vec{x},\theta,t)$, leads to
$$
c(p(t),p(t))-b(\vu(t),p(t)) + \int_0^t d(p(s),p(t)) \ud{s} 
= \int_0^t \left(g(s),\mathbb{E}(p(t))\right)_D \ud{s} + \left(\phi_0,\mathbb{E}(p(t))\right)_D.
$$
We define $z(t)=\int_0^t p(s) \ud{s}$ and observe that $\dt z(t)=p(t)$ and $z(0)=0$. Using the linearity of $d$ and the formula $\dt z^2(t) = 2 z(t)\dt z(t)$ to rewrite the third term in the left hand side of the previous relation, and recalling~\eqref{eq:defGrhs}, we get 
$$
c(p(t),p(t))-b(\vu(t),p(t)) + \frac12\dt \left[d(z(t),z(t)) \right]
= \left(G(t),\mathbb{E}(p(t))\right)_D.
$$ 
Then, integrating on $(0,\tF)$ and summing the resulting identity to~\eqref{eq:energy_balance1}, gives 
\begin{multline}
  \label{eq:energy_balance2}
  \int_0^{\tF} a(\vu(t),\vu(t))\ud{t} + \int_0^{\tF}c(p(t),p(t))\ud{t}+ \frac12 d(z(\tF),z(\tF)) = 
  \\
  \int_0^{\tF}\left(\vf(t),\mathbb{E}(\vu(t))\right)_D \ud{t}+
  \int_0^{\tF}\left(G(t),\mathbb{E}(p(t))\right)_D \ud{t}.
\end{multline}
To establish the result, we now bound the right-hand side of the previous relation. First, we observe that, owing to the Jensen inequality, for all $\vv\in\vec{U}$ it holds
$$
\norm[\Lvec{D}]{\mathbb{E}(\vv)}^2
=\int_D \mathbb{E}( \vv(\vec{x},\cdot))^2 \ud{\vec{x}}
\le \mathbb{E}\left( \int_D \vv(\vec{x},\cdot)^2\ \ud{\vec{x}} \right)
= \norm[L_\mathcal{P}^2(\Theta;\Lvec{D})]{\vv}^2
\le \norm[\vec{U}]{\vv}^2.
$$ 
Using the previous relation and the Cauchy--Schwarz and Young inequalities followed by~\eqref{eq:coercivity_a}, it is inferred that
$$
\begin{aligned}
\int_0^{\tF}\left(\vf(t),\mathbb{E}(\vu(t))\right)_D \ud{t} &\le
\int_0^{\tF} (2\underline\mu C_{\rm K}^{-1})^{\nicefrac{-1}2}\norm[\Lvec{D}]{\vf}
(2\underline\mu C_{\rm K}^{-1})^{\nicefrac12}\norm[\vec{U}]{\vu} \ud{t} 
\\
&\le \frac{C_{\rm K}}{4\underline\mu}\int_0^{\tF} \norm[\Lvec{D}]{\vf}^2 \ud{t} 
+ \frac12\int_0^{\tF} a(\vu(t),\vu(t))\ud{t}.
\end{aligned}
$$
Proceeding similarly for the second term in the right-hand side of~\eqref{eq:energy_balance2} and recalling the lower bound~\eqref{eq:bndc0}, one has
$$
\begin{aligned}
  \int_0^{\tF}\left(G(t),\mathbb{E}(p(t))\right)_D \ud{t} &\le
  \int_0^{\tF} \norm[L_\mathcal{P}^2(\Theta;L^2(D))]{c_0^{\nicefrac{-1}{2}} G}
  \norm[L_\mathcal{P}^2(\Theta;L^2(D))]{c_0^{\nicefrac12} p} \ud{t}
  \\
  &\le\frac{\overline{K}}{\underline\alpha}\int_0^{\tF} \norm[L^2(D)]{G}^2 \ud{t} 
  + \frac12\int_0^{\tF} c(p(t),p(t))\ud{t}.
\end{aligned}
$$
Plugging the two previous estimates into~\eqref{eq:energy_balance2}, multiplying by a factor $2$, and using $d(z(\tF),z(\tF))\ge0$, yields the conclusion.
\end{proof}
Some remarks are in order. 
\begin{remark}[Inf-sup condition]
We observe that it is possible to derive the previous a priori estimate without~\eqref{eq:bndc0}. Indeed, the inf-sup condition~\eqref{eq:inf-sup_b} together with~\eqref{eq:weak_form.mech} and \eqref{eq:ass.dilat_mod} allow to bound the second term in the right-hand side of~\eqref{eq:energy_balance2} using the following: for a.e. $t\in (0,\tF)$,
\begin{equation*}
\begin{aligned}
\norm[L^2_\mathcal{P}(\Theta; L^2(D))]{p(t)} &\le (\underline\alpha  C_{\rm is})^{-1} 
\sup_{\vec0\neq\vv\in\vec{U}} \frac{b(\vv,p(t))}{\norm[\vec{U}]{\vv}} = (\underline\alpha  C_{\rm is})^{-1}
\sup_{\vec0\neq\vv\in\vec{U}}\frac{(\vf(t),\mathbb{E}(\vv))_D-a(\vu(t),\vv)}{\norm[\vec{U}]{\vv}}
\\ &\le
\frac1{\underline\alpha  C_{\rm is}}\norm[\Lvec{D}]{\vf(t)}+
\frac{\overline{K}^{\nicefrac12}}{\underline\alpha C_{\rm is}} 
a(\vu(t),\vu(t))^{\nicefrac12}.
\end{aligned}
\end{equation*}
The resulting stability estimate would have, compared to~\eqref{eq:aprori}, an additional dependence on $\underline\alpha C_{\rm is}$. 
\end{remark}
\begin{remark}[Data regularity]
In order to prove the a priori bound \eqref{eq:aprori}, no additional time regularity assumption on the loading term 
$\vf$ and the mass source $g$ are needed. However, under the additional requirements $\vf\in C^1([0,\tF];\Lvec{\Omega})$ and $g\in C^1([0,\tF];L^2(\Omega))$, a stronger version of \eqref{eq:aprori} can be inferred using the argument of \cite{Marciniak-Czochra:15} and \cite{Owczarek:10}.
\end{remark}
\begin{remark}[Quasi-incompressible media]
In order to prove the stability estimate~\eqref{eq:aprori}, no additional assumption on the mobility $\kappa:D\times\Theta\to\Real^+$ is required. Thus, Proposition~\ref{pro:aprori} can handle the case of locally poorly permeable media (\ie~$\kappa\ll1$). However, assuming~\eqref{eq:ass.dilat_mod} does not allow to consider quasi-incompressible materials for which $\lambda\gg1$. To obtain a robust estimate in the case of $\lambda$ unbounded, we can proceed as in \cite[Theorem 1]{Kolesov.Vabishchevich.ea:14}. Assuming $\vf\in H^1((0,\tF);\Lvec{D})$ and $\kappa$ uniformly bounded away from zero, the Darcy term gives a $L^2((0,\tF);P)$ estimate of the pressure and, as a consequence,~\eqref{eq:ass.dilat_mod} and~\eqref{eq:bndc0} are not needed. We remark that, in a medium featuring very low permeability, an incompressible fluid cannot flow unless the material is compressible, namely the two limit cases $\kappa\ll1$ and $\lambda\gg1$ cannot occur simultaneously. Therefore, the stochastic poroelasticity problem need to be dealt with uncertainty models preventing this situation.
\end{remark} 
\begin{remark}[Lam\'e's coefficients]
The well-posedness of problem~\eqref{eq:weak_form} holds under weaker assumption on $\mu$ and $\lambda$. More precisely, one can assume instead of~\eqref{eq:ass.shear_mod} and~\eqref{eq:ass.dilat_mod}, that $\mathcal{P}$-a.e. in $\Theta$ it holds
\begin{equation*}
0<\underline\mu(\theta)\le\mu(\vec{x},\theta), \qquad
0\le \frac{2\mu(\vec{x},\theta)+d\lambda(\vec{x},\theta)}{d}\le\overline{K}(\theta) 
\quad {\rm a.e.} \,\text{ in } D,
\end{equation*} 
where $\underline\mu(\theta)^{-1}$ and $\overline{K}(\theta)$ are second-order random variables. An assumption of this type is convenient when $\lambda$ is an unbounded (\eg~Gaussian or lognormal) random variable at $\vec{x}\in D$. See \cite[Lemma 1.2]{Babuska.Nobile.ea:07} and \cite{Charrier:12} for a discussion in the context of elliptic PDEs with random data.
\end{remark}


\section{Probabilistic framework}\label{sec:prob.setting}

In many applications, only limited information about the poroelastic coefficients in \eqref{eq:Biot1} and \eqref{eq:Biot2} is available. In the context of geomechanics, \eg, even when it is possible to carry out a large number of measurements, the actual knowledge of the soil properties typically suffers from inaccuracies: due to the presence of different layers, the physical properties can have strong variations that are difficult to estimate.  

\subsection{Uncertain poroelastic coefficients}\label{sec:poro_coeffs}

In this section we present the probabilistic setting and we introduce the parametrization of the random coefficients using canonical random variables. Some preliminary investigations on the possible choices to model the uncertain coefficients are subsequently presented.

\subsubsection{Parametrization with canonical variables}

We use a set of canonical random variables, collected into a random vector $\vec\xi=(\xi_1,\ldots,\xi_N):\Theta\to\Xi$, to describe the uncertainty of the poromechanical coefficients. To ensure the independence of the random coefficients, it suffices to use one canonical random variable per coefficient, and rely on changes of measure to define the mappings between the $\xi_i$ and their respective coefficients. 
Specifically, we deal with uniformly \textit{iid} canonical variables, namely
$\xi_i\sim \mathcal{U}([-1,1])$ and $\xi_i\perp\xi_j$ if $i\neq j$. 
Thus, the $\xi_i$'s have zero mean, are independent with product form  probability law $\mathcal{P}_{\vec\xi}$ and joint density function $\rho(\vec\xi)$. 
We denote by $\mathcal{B}_{\vec\xi}$ the Borel $\sigma$-algebra on $\Xi$ and by $(\Xi,\mathcal{B}_{\vec\xi},\mathcal{P}_{\vec\xi})$ the image probability space. 
We also define the space of second-order random variables on $(\Xi,\mathcal{B}_{\vec\xi},\mathcal{P}_{\vec\xi})$ as the weighted Lebesgue space $L^2_{\rho}(\Xi)$.
The expectation operator on the image space is denoted using brackets and is related to the expectation on $(\Theta,\mathcal{B},\mathcal{P})$ through the identity
$$
  \langle h\rangle \coloneq \int_\Xi h(\vec{\xi})\rho(\vec{\xi}) \ud\vec\xi =
  \int_\Theta h(\vec\xi(\vec{\theta})) \ud\mathcal{P}(\vec{\theta}) = \mathbb{E}(h\circ\vec{\xi}).
$$
The variance operator of $h\in(\Xi,\mathcal{B}_{\vec\xi},\mathcal{P}_{\vec\xi})$ is then defined by
$$
  \Var(h)\coloneq \left\langle (h -\langle h\rangle)^2\right\rangle.
$$

\subsubsection{Probabilistic model}
In order to account for the uncertainty of the poroelastic material, a model for its properties is needed. 
For the sake of simplicity, we assume that the fluid bulk modulus and the porosity are 
deterministic: $\Kf=2.2\ {\rm GPa}$ and $\varphi=0.2$.
Our input uncertainty is then parametrized by a vector $\vec\xi$ of dimension $N=4$.
The set of random coefficients consists in the two Lam\'e's constants $\lambda(\vec\xi)$ and $\mu(\vec\xi)$, the Biot--Willis coefficient $\alpha(\vec\xi)$, and the hydraulic mobility $\kappa(\vec\xi)$. These coefficients will be considered all mutually independent, with no spatial variabilities. The case of spatial variability can be handled by introducing a Karhunen--Lo\`eve expansion of the corresponding stochastic fields~\cite{Loeve:77,Schwab.Todor:06} but involving a high number of input random variables.  
Even if the experiments presented in this work are mainly academic model problems, we try to consider realistic sets of parameters and have a particular regard for the underlying physical phenomena. The ranges of variation of the uncertain input data are inspired from \cite[Table 1]{Rice.Cleary:76}, \cite[Section 3]{Zimmerman:00}, and \cite[Table 4.1]{Coussy:04}, and correspond to an ideal water filled sand soil with a $20\%$ reference porosity. 
In particular, for the numerical investigations of Section \ref{sec:tests.validation}, we assume that $\mu,\lambda$, and $\kappa$ have a log-uniform distribution, while $\alpha$ is uniformly distributed. We set
 \begin{equation}\label{eq:tc_parameters}
   \begin{aligned}
     \mu(\vec{\xi}) &= 10^{(\xi_1+1)} \ {\rm kPa},
     \\
     \lambda(\vec{\xi}) &= 2\cdot10^{(\xi_2+1)} \ {\rm kPa},
     \\
     \alpha(\vec{\xi}) &= \frac{\alpha_{\rm max}+\alpha_{\rm min}}{2} + \xi_3 \frac{\alpha_{\rm max}-\alpha_{\rm min}}{2},
     \\
     \kappa(\vec{\xi}) &= 10^{(\xi_4-1)} \ {\rm m}^2 {\rm kPa}^{-1} {\rm s}^{-1}.
   \end{aligned}
 \end{equation}
 According to \eqref{eq:Biot_coeffs} and \eqref{eq:bnd.Zimmerman}, we take 
 $\alpha_{\rm min}=3\varphi(2+\varphi)^{-1}$ and $\alpha_{\rm max}=1$. This choice yields 
 \begin{equation}\label{eq:mean.cv}
 \begin{aligned}
 \langle\mu\rangle &\sim 21.5\ {\rm kPa},\quad\langle\lambda\rangle \sim 43\ {\rm kPa},\quad\langle\alpha\rangle\sim 0.64, \;
 \text{ and }\quad\langle\kappa\rangle \sim 0.22 \frac{{\rm m}^2}{{\rm kPa}\ {\rm s}},
 \\
 {\rm c_v}(\mu) &\sim 1.16,\quad\quad{\rm c_v}(\lambda) \sim 1.16,
 \quad\, {\rm c_v}(\alpha)\sim 0.33, \;\text{ and }\;{\rm c_v}(\kappa)\sim 1.16,
 \end{aligned}
 \end{equation}
where ${\rm c_v}(\cdot)\coloneq\frac{\Var(\cdot)^{\nicefrac12}}{\langle \cdot\rangle}$ denotes the coefficient of variation operator. In particular we observe that the previous average values have the same magnitude as the coefficients considered 
in~\cite[Table 2]{Gaspar.Lisbona.ea:07}. The case of other standard distributions can be handled similarly introducing the inverse of the cumulative distribution function of the considered coefficient.

According to the results presented in Section \ref{sec:storage_uncertain}, the constrained specific storage coefficient $c_0$ can be expressed in terms of the elastic moduli $\lambda,\mu$ and the coupling Biot coefficient $\alpha$ (see also \cite{Cosenza.ea:02,Green.Wang:90} for theoretical and empirical investigations on the storage coefficient) as well as the porosity and the fluid bulk modulus. Therefore, we let
\begin{equation}
  \label{eq:random_coeffs.c0}
  c_0(\xi)\coloneq c_0(\mu(\vec\xi),\lambda(\vec\xi),\alpha(\vec\xi),\varphi,\Kf).
\end{equation}
The advantage of this approach is threefold: 
\begin{inparaenum}[(i)]
 \item it produces a strategy of perturbation with independent model coefficients,
 \item it ensures that the set of poroelastic coefficients belong to the physical admissible set, and
 \item it allows to reduce the uncertainty dimension and then the size of the sampling (\ie~the number of sparse grid points).
\end{inparaenum}
We illustrate here the definition of $c_0(\vec\xi)$ in \eqref{eq:random_coeffs.c0}, reporting its probability distribution for random poromechanical coefficients as in \eqref{eq:tc_parameters}. 
For each realization of the random poromechanical coefficients, we solve \eqref{eq:2order_c0} in order to compute the corresponding value of $c_0$. 
\begin{figure}
  \centering
  \includegraphics[height=6.2cm]{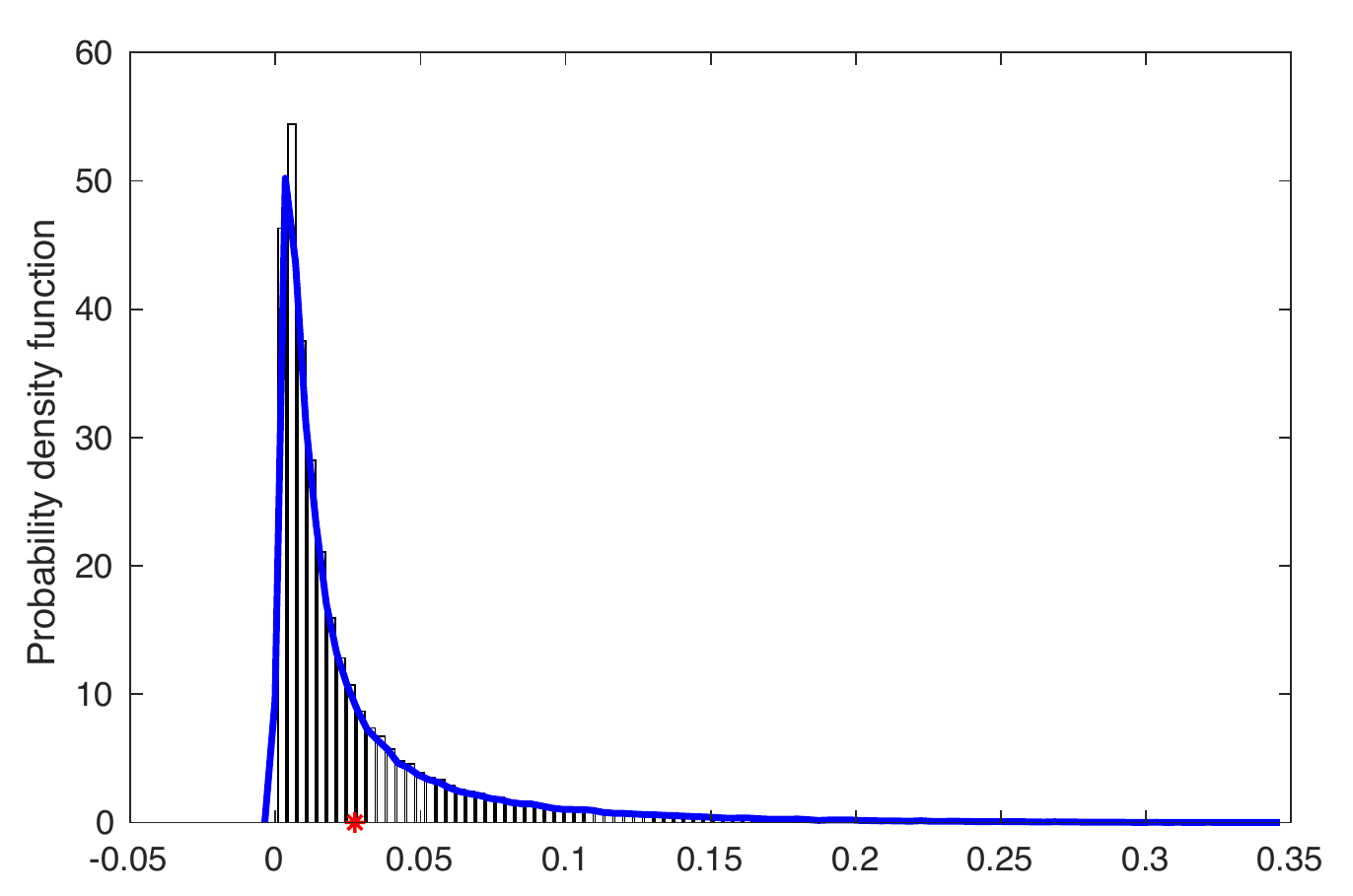}
  \caption{Distribution of $c_0$ obtained by solving \eqref{eq:2order_c0} with random input coefficients $\lambda, \mu$, and $\alpha$ distributed according to \eqref{eq:tc_parameters} (data are in ${\rm kPa}^{-1}$). Densities are estimated using $10^5$ realizations.\label{fig:model_comp}}
\end{figure} 
In Figure \ref{fig:model_comp} we plot the histogram and probability density function of $c_0$, estimated using $10^5$ realizations of the poromechanical coefficients.
We observe that our choice leads to a specific storage coefficient $c_0$ ranging in $(2\cdot10^{-4},3\cdot10^{-1})\ {\rm kPa}^{-1}$ and its probability density function is far from being uniform with a highest probable value close to zero.

Different choices are possible to describe the mechanical behavior of the material and, in practice, the selection of the particular coefficients defining the elastic properties of the medium is mostly one of convenience.
For instance, one might consider the Young modulus $E$ (\eg~in \cite{Khan.Powell.ea:18}) or the bulk modulus $K$ and the Poisson ratio $\nu$ (\eg~in \cite{Rice.Cleary:76,Cosenza.ea:02}), as the independent driving coefficients.
Here we choose instead Lam\'e's coefficients  $\mu$ and $\lambda$ as primary independent variables, because they explicitly appear in the governing equations and are also more conveniently defined through simple transformations of independent canonical variables. 
The corresponding couple $(K,\nu)$ can be deduced from Lam\'e's coefficients $(\mu,\lambda)$ using the relations 
$$
  K(\vec\xi)=\frac2d\mu(\vec\xi) + \lambda(\vec\xi)
  \quad\text{ and }\quad
  \nu(\vec\xi)=\frac{\lambda(\vec\xi)}{2(\mu(\vec\xi) + \lambda(\vec\xi))}. 
$$
Figure~\ref{fig:mulam_nonuniform} shows the marginals of $K$ and $\nu$ for independent primary mechanical variables $(\mu,\lambda)$ distributed according to \eqref{eq:tc_parameters}. 
\begin{figure}
  \centering
  \begin{minipage}{0.48\textwidth}\centering
    \includegraphics[height=6cm, width=7.5cm]{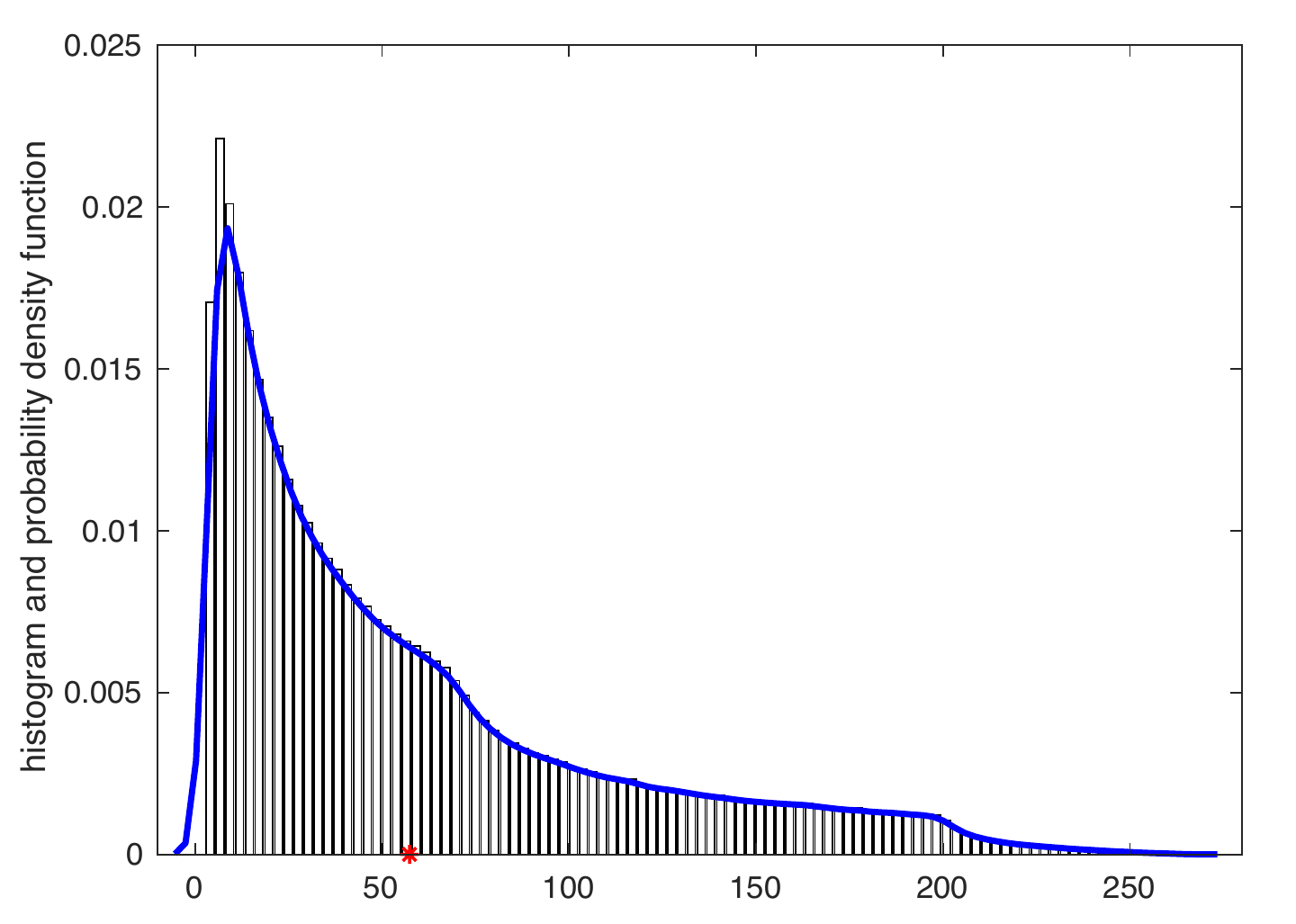}
  \end{minipage} 
  \hspace{2mm}
  \begin{minipage}{0.48\textwidth}\centering
    \includegraphics[height=6cm, width=7.5cm]{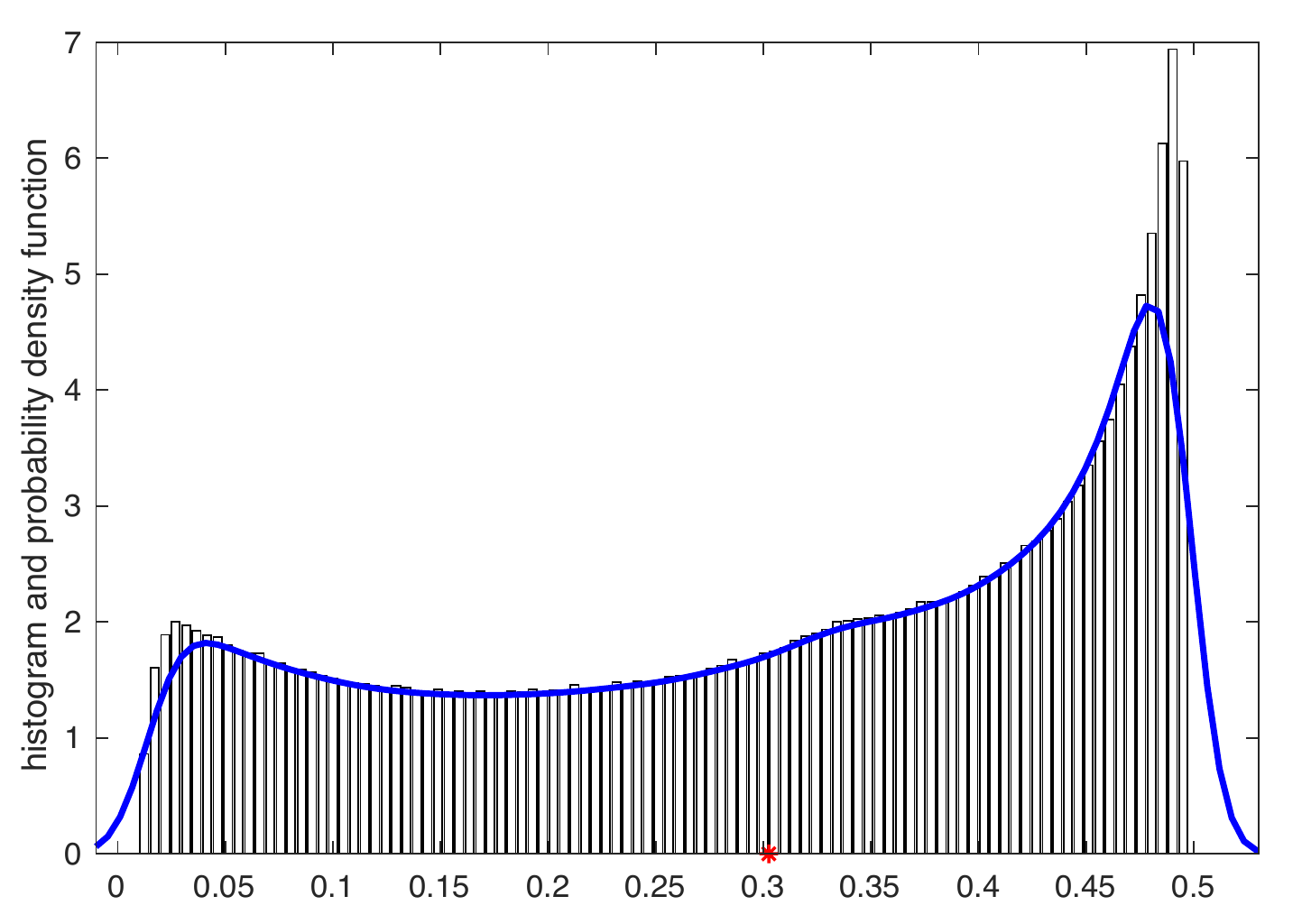}
  \end{minipage} 
  \caption{Marginal distributions of the bulk modulus $K$ (left) and the Poisson ratio $\nu$ (right) corresponding to $10^6$ realizations for $\mu$ and $\lambda$ distributed as in \eqref{eq:tc_parameters}.\label{fig:mulam_nonuniform}}
\end{figure}
Starting from independent $K$ and $\nu$ results in a complex dependent distribution of the coefficients $\lambda$ and $\mu$ appearing in the model equations. This is due to the non-linear relation $(K,\nu) \mapsto (\lambda,\mu)$ that can be appreciated from Figure~\ref{fig:mulam_scatter} for the case where $K$ is log-uniformly distributed and $\nu$ is uniformly distributed. 
\begin{figure}
  \centering
    \includegraphics[height=6.1cm]{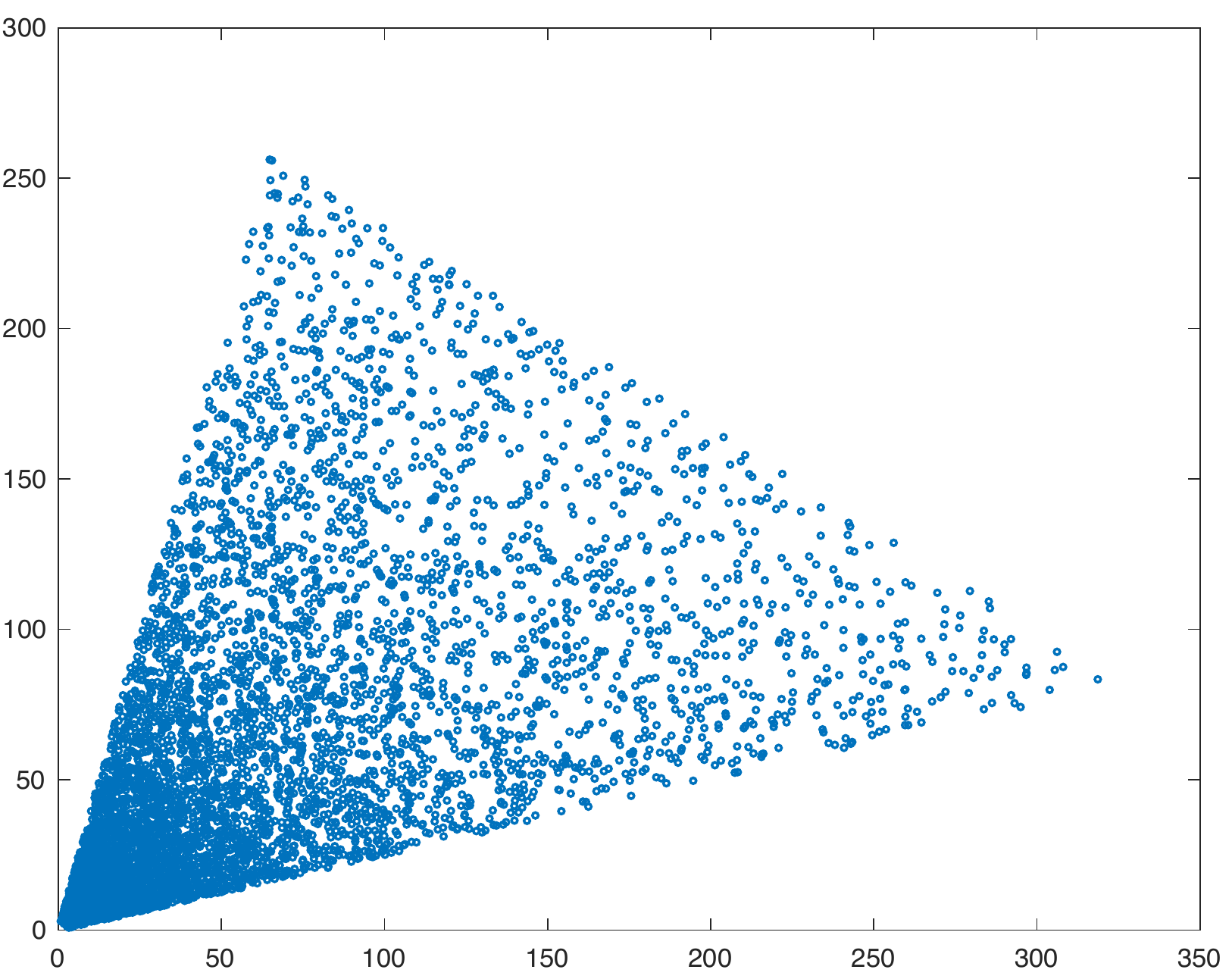}
  \caption{Scatter plot of $10^4$ realizations $(\mu,\lambda)$ for $K\sim\mathcal{LU}([3, 300]) \ {\rm GPa}$ and $\nu\sim\mathcal{U}([0.1, 0.4])$. 
\label{fig:mulam_scatter}}
\end{figure}


\subsection{Stochastic discretization}\label{sec:discrete_setting}
Now that the uncertain coefficients and their parametrization have been detailed, we introduce in this section the Polynomial Chaos expansions of the random model solution and outline the Pseudo-Spectral Projection algorithm used for its determination. The PSP method is one of the so-called non-intrusive techniques, which rely on an ensemble of deterministic model simulations to estimate the expansion coefficients of the solution. 

\subsubsection{Polynomial Chaos expansion}
PC expansions were initially proposed by Wiener \cite{Wiener:38} for the approximation of second order random quantities from standard \textit{iid} Gaussian variables. It was later proposed to use these polynomial expansions for UQ~\cite{Ghanem:91}. Concerning the generalization to non-Gaussian measure and non-polynomial expansions, see \cite{Xiu:02} and references within \cite{LeMaitre:10}. 

Let us denote by $\{ \phi_{\vec{k}}(\vec{\xi}) : \vec{k}\in \Natural^N\}$ an Hilbertian basis of $L^2_\rho(\Xi)$, where $\phi_{\vec{k}}$ is a multivariate polynomial in $\vec{\xi}$ and $\vec{k}=(k_1,\ldots, k_N)$ is a multi-index indicating the polynomial degree in the $\xi_i$'s. The total degree of $\phi_{\vec{k}}$ is $|\vec{k}|_1\coloneq\sum_{i=1}^N k_i$. The basis functions are commonly chosen to be orthogonal with respect to the inner product in $L^2_\rho(\Xi)$ characterized by the probability density function $\rho:\Xi\to\Real^+$,
\begin{equation}\label{eq:PC_orthogonality}
  \langle \phi_{\vec{k}},\phi_{\vec{l}}\rangle = 
  \int_{\Xi} \phi_{\vec{k}}(\vec{\xi}) \phi_{\vec{l}}(\vec{\xi}) \rho(\vec{\xi})\ud\vec{\xi}
  =\delta_{\vec{k},\vec{l}}. 
\end{equation}
In our case, where $\Xi$ is the hypercube $[-1,1]^N$ with uniform density, the $\phi_{\vec k}$ are in fact the multivariate Legendre polynomials. If $X(\vec{\xi})\in L^2_\rho(\Xi)$ is a second-order random variable, then it admits the PC expansion~\cite{Wiener:38,Cameron:47},
\begin{equation}\label{eq:Modedef1}
    X(\vec{\xi})= \sum_{\vec{k}\in\Natural^N} X_{\vec{k}} \phi_{\vec{k}}(\vec{\xi}), 
\end{equation}
where the deterministic coefficients of the series $\{X_{\vec{k}} : \vec{k}\in \Natural^N\}$ are called the \textit{spectral modes}. The PC approximation $X_{\mathcal{K}}(\vec{\xi})$ of $X(\vec{\xi})$ is obtained by truncating the expansion above to a finite series,
\begin{equation}\label{eq:PCdef}
X_{\mathcal{K}}(\vec{\xi}) \coloneq \sum_{\vec{k}\in\mathcal{K}} X_{\vec{k}}\phi_{\vec{k}}(\vec{\xi}),
\end{equation}
where $\mathcal{K}\subset\Natural^N$ is a finite set of multi-indices.
Observing that the degree zero polynomial is $\phi_{\vec{0}} =1$, the expectation and variance of $X(\vec \xi)$ simply express as
\begin{align}
  \label{eq:PCexpectation}
  \langle X_{\mathcal{K}} \rangle = X_{\vec0}, \quad
  \Var(X_{\mathcal{K}}) =\sum_{\vec{k}\in\mathcal{K}\setminus\vec0} X_{\vec{k}}^2. 
\end{align}
Different truncation strategies, \ie~the selection of the set $\mathcal{K}$, can be considered, with different resulting error $e_{\mathcal{K}}(\vec{\xi}):=X(\vec{\xi})-X_{\mathcal{K}}(\vec{\xi})$.
In particular, if the approximation is defined as the $L^2$-orthogonal projection of $X$, the spectral coefficients are defined by 
\begin{equation}\label{eq:Modedef}
  X_{\vec{k}}\coloneq \langle X,\phi_{\vec{k}}\rangle,
\end{equation}
and it is clear that the $L^2$-error norm does not increase when $\mathcal K$ is enlarged. Indeed, we have in this case
$$
\norm[L^{2}_\rho(\Xi)]{X-X_{\mathcal{K}}}^2 = \sum_{\vec k \in \Natural^N \setminus \mathcal K} |X_{\vec k}|^2.
$$
A classical truncation strategy is based on the degree of the expansion; for instance, one can define $\mathcal K$ such that the approximation basis contains all multivariate polynomials with total ($|\vec k|_1\le p$) or partial 
($|\vec k|_\infty\coloneq\max_{1\le i \le N} k_i \le p$) degree up to a prescribed value $p\ge 0$. The convergence rate of the PC expansion of $X$ with respect to the degree $p$ depends on the regularity (in $\vec \xi$) of $X$. In particular, an exponential convergence is expected for analytical variables; more details about the convergence conditions can be found in \cite{Cameron:47,Ernst:12}. 

\subsubsection{Sparse Pseudo-Spectral Projection}

Several methods have been proposed to compute the spectral modes of the PC expansion~\cite{LeMaitre:10,Iskandarani:16}. Among these methods, we distinguish between the intrusive and non-intrusive ones. The former involve a reformulation of the problem to derive a set of governing equations for the solution modes, while the latter rely on the availability of a deterministic solver only. 
As a first attempt to apply PC expansions to propagate uncertainty in the Biot model, we opted for a non-intrusive projection method which, in our experience, presents the best trade-off between numerical complexity and precision, compared to alternatives such as regression and compressed sensing. 

The Non-Intrusive Spectral Projection (NISP) method is based on approximating the correlation in the right-hand-side of \eqref{eq:Modedef} by a deterministic numerical quadrature formula, such as
\begin{equation}\label{eq:Modes}
  X_{\vec{k}} = \langle X,\phi_{\vec{k}}\rangle 
  \simeq \sum_{q=1}^{N_q} {w^{(q)} X(\vec{\xi}^{(q)}) \phi_{\vec{k}}(\vec{\xi}^{(q)}) },
\end{equation}
where the $N_q$ quadrature nodes $\vec{\xi}^{(q)}$ and weights $w^{(q)}$ are classically constructed by tensorization of one-dimensional quadrature rules (\textit{e.g.} Gauss, F\'ej\`er, Clenchaw--Curtis, \dots). 
The complexity of the NISP method is governed by the number $N_q$ of evaluations of the model, while the accuracy depends on the basis which can be considered given the quadrature rule. 
Clearly, these two aspects are intertwined, as larger expansion bases require higher order quadrature rules with larger computational efforts. 
In practice, the basis is defined as the largest one such that the discrete projection is exact for any function belonging to the linear-span of the basis or, in other words, that the non-intrusive projection incurs no internal aliasing. Equivalently, the multi-index set $\mathcal{K}$ must satisfy 
$$
  \forall \vec{k}, \vec{l} \in \mathcal{K}, \: \sum_{q=1}^{N_q} {w^{(q)} \phi_{\vec{k}}(\vec{\xi}^{(q)})
  \phi_{\vec{l}}(\vec{\xi}^{(q)}) } = \delta_{\vec{k}\vec{l}}.
$$

Exploiting the product structure of the stochastic space, sparse tensorizations of one-dimensional quadrature rules~\cite{Gerstner:98,Gerstner:03} have been proposed to mitigate computational complexity, especially for a larger number $N$ of canonical random variables. The sparse constructions generally rely on the Smolyak's formula~\cite{Smolyak:63} and lead to the so-called sparse-grid projection methods. One drawback of using sparse quadrature rules, as opposed to fully tensorized quadrature rules, is that some of the weights $w^{(q)}$ are negative even for one-dimensional formulas with positive weights only. 
Therefore, the sparse quadratures are not constituting discrete inner products, with some restrictions on the possible aliasing-free bases as a result. 

These limitations were recognized in~\cite{Conrad:13,Constantine:12}, where the authors proposed the so-called Pseudo-Spectral Projection (PSP) methods. 
The key-idea of PSP is to apply the Smolyak's formula directly on the projection operator, rather than on the integration operator. The projection is seen as a sequence of nested subspace-projections, where each subspace-projection is computed using a fully-tenzorized quadrature rule. Consequently, the determination of each spectral mode uses an adapted fully-tensorized quadrature rule with improved properties compared to the unique one in \eqref{eq:Modes}. 
Specifically, for the same sparse grid, with $N_q$ nodes $\vec{k}^{(q)}$, the PSP method can compute without internal aliasing the projection coefficients $X_{\vec{k}}$ of a larger set $\mathcal K$ of basis functions $\phi_{\vec{k}}$, compared to the NISP approach. 

In this work, we then use a sparse PSP method, constructed on nested one-dimensional Clenshaw--Curtis quadrature rules. As the canonical random variables account for uncertainties in different poromechanical coefficients whose respective influences on the model solution are unknown, we decided to rely on an isotropic construction. In the PSP context, it amounts to consider isotropic partial tensorization, with a unique parameter controlling the complexity and accuracy of the approximation: the maximum level $l\in \Natural$ of isotropic the sparse grid. As the level $l$ of the PSP method increases, both the number of sparse grid nodes $N_q(l)$ and the multi-index set $\mathcal{K}(l)$ increase.  


\section{Test cases}\label{sec:tests}

We now assess the performance of the PSP method detailed in the previous section to treat the stochastic part of the Biot problem~\eqref{eq:SysBiot} due to parametric uncertainties.
Recall that the PSP method requires to solve $N_q$ deterministic Biot problems (discretized by the method of~\cite{Boffi.Botti.Di-Pietro:16}) for different values of uncertain input parameters. Note that the PSP method can be applied with alternative solvers, possibly based on a non-monolithic formulation (\textit{i.e.}, splitting methods). 
Our numerical experiments feature a simplified model with spatially homogeneous random coefficients. 
More sophisticated uncertain models, requiring (possibly strongly correlated) spatially varying input coefficients with non-standard distributions will be the object of future works. We begin this section with two validation cases; the first one has a fluid forcing term while a mechanical boundary condition drives the second one. An injection-production test case at large spatial scale is then presented to assess the methodology on a more realistic problem and perform a sensitivity analysis to illustrate the interest of the method.

\subsection{Validation test cases}\label{sec:tests.validation}
To start, we implement two experiments to study the convergence of the PSP method on the Biot problem. These two test cases are complementary in the sense that they consider a fluid or mechanical solicitation of the porous medium. More precisely, the first case is an injection test designed to investigate the effect of a fluid source on the deformation of the domain, while the second case is meant to evaluate the mechanical effect of a traction term on the fluid pressure. 
These two cases are defined on the unit square domain $D=[0,1]^2$ and the loading term $\vf$ in~\eqref{eq:SysBiot1} as well as the initial condition $\phi_0$ in~\eqref{eq:SysBiot:initial} are both equal to zero. The probability distributions of the random poroelastic coefficients are defined in Section~\ref{sec:poro_coeffs}. After describing the test cases, we numerically investigate the convergence of the stochastic error with respect to the level of the sparse grid. We also analyze the variance and covariance fields of the PC solutions.  

\subsubsection{Point injection}
The first experiment is inspired by the Barry and Mercer test case~\cite[Section 6.2]{Boffi.Botti.Di-Pietro:16} (see also ~\cite{Barry.Mercer:99,Phillips.Wheeler:07*1}) in which the displacement and pressure fields are driven by a stationary fluid source
$$
g(\vec{x}) =10\ \delta(\vec{x}-\vec{x}_0),
$$
where $\vec{x}_0=(0.25,0.25)$ denotes the source location and $\delta$ the Dirac delta function. 
Homogeneous mechanical and fluid boundary conditions are enforced on the whole boundary of the domain,
$$
\begin{aligned}
  \vu\cdot\vec{\tau}&=0,&\text{on }\partial D,\\
  \GRAD\vu\normal\cdot\normal&=0,&\text{on }\partial D,\\
  p &= 0,&\text{on }\partial D,
\end{aligned}
$$
where $\vec{\tau}$ denotes the tangent unit vector on $\partial D$ such that $\vec{\tau}\perp\normal$ with arbitrary orientation.
The following results have been obtained using the method of~\cite{Boffi.Botti.Di-Pietro:16} with polynomial degree $k=3$ on a Cartesian mesh containing $\pgfmathprintnumber{1024}$ elements. Using the static condensation procedure~\cite[Section 5]{Boffi.Botti.Di-Pietro:16}, the discrete system has $\pgfmathprintnumber{27648}$ unknowns. We are interested in the stationary solution, namely the pressure and displacement fields manifesting after the initial transient phase, and report the solution at the final time $\tF=1 {\rm s}$ obtained after $10$ time steps of the implicit Euler scheme.  
\begin{figure}
  \centering
  \begin{minipage}{0.47\textwidth}\centering
    \includegraphics[height=6cm]{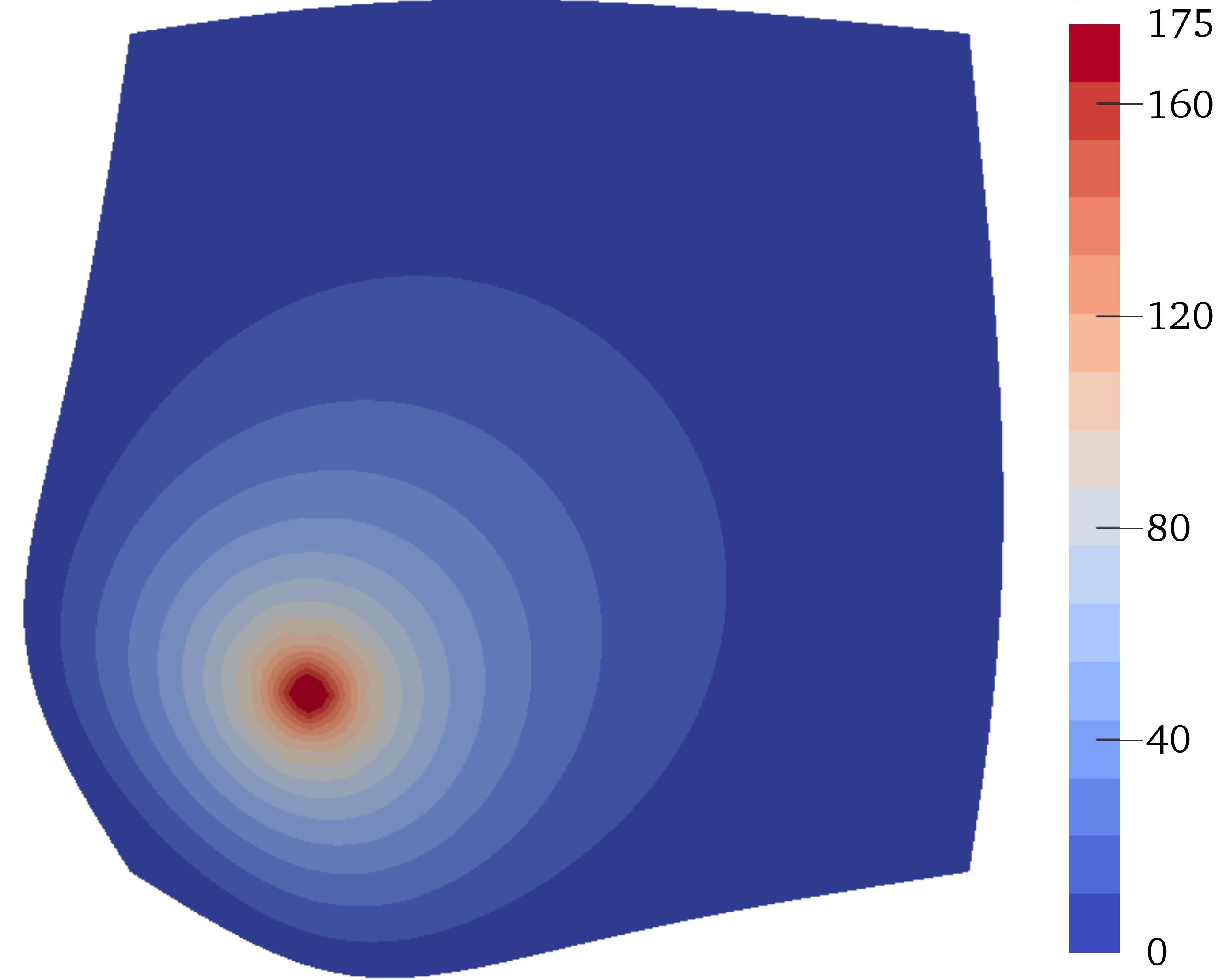}
  \end{minipage} 
  \hspace{2mm}
  \begin{minipage}{0.47\textwidth}\centering
    \includegraphics[height=6cm]{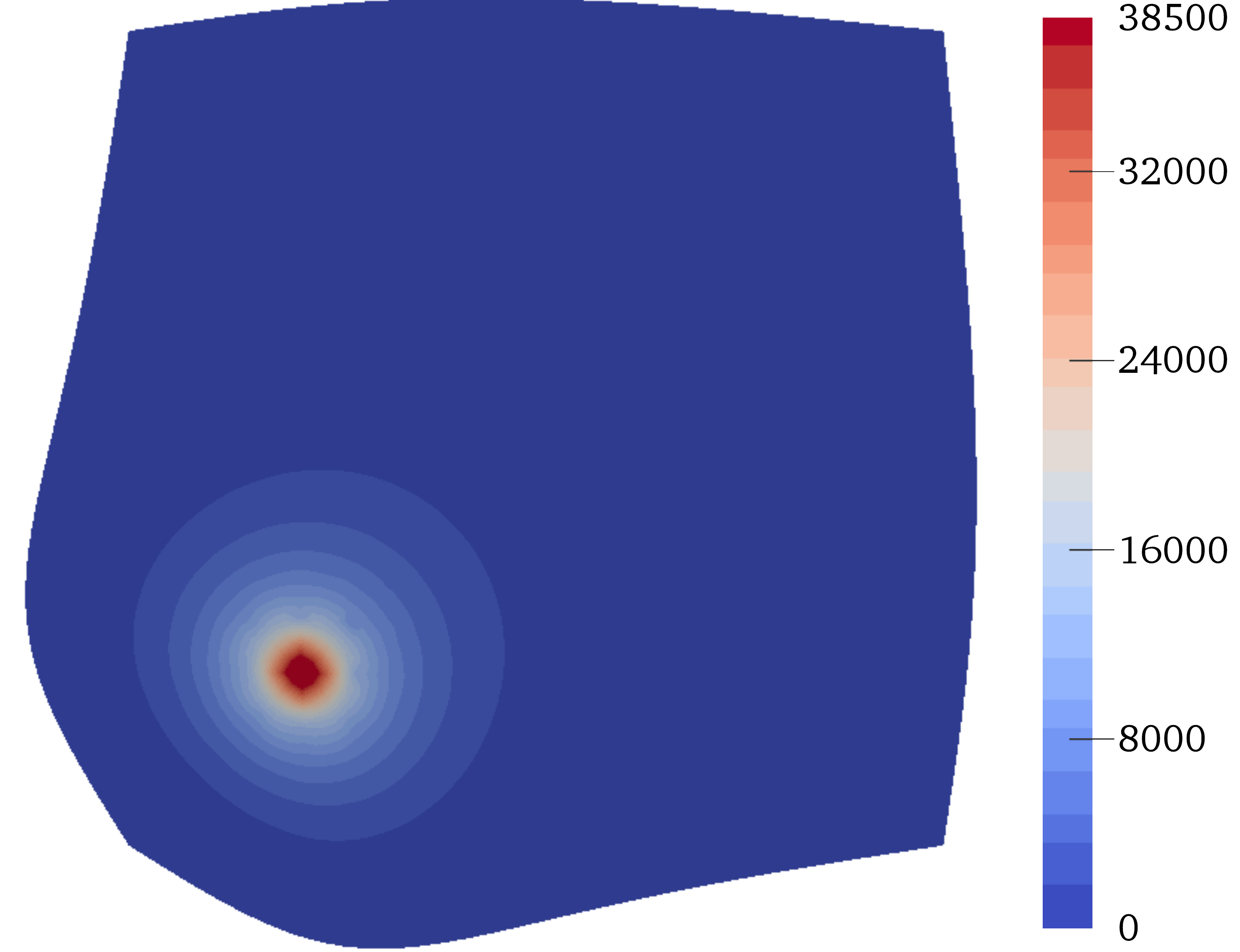}
  \end{minipage}
  \caption{Mean (left) and variance (right) of the pressure field plotted on the deformed domain at $t=\tF=1 {\rm s}$ (data are in ${\rm kPa}$). 
   Solution for the PSP method with level $l=5$ and $N_q(l)=2561$.
  \label{fig:TC1_MeanVar}}
\end{figure}

Figure~\ref{fig:TC1_MeanVar} represents the two first statistical moments~\eqref{eq:PCexpectation} of the pressure field approximated using a level $l=5$ sparse grid. 
We plot the solution on the deformed configuration obtained by applying the (scaled) computed mean displacement field to the reference unit (undeformed)  square domain $D$.
As expected, we observe a dilatation of the domain caused by the injection. 
The computed solutions are symmetric with respect to the diagonal according to the source location and the homogeneous boundary conditions and to the fact that the computational mesh also has the same symmetry.
The mean pressure field exhibits maximum values at the source, while only small perturbations are observed near the boundary. 
The variance of the pressure field has the same behavior as the mean, with higher values close to the injection point.
We notice that the magnitude of the standard deviation, \ie~the square root of the variance, is comparable to the mean field range.  
We draw attention to the fact that the variance (and covariance) fields must be carefully computed since the spatial dependence involves the products of broken polynomial functions on the mesh and its skeleton.  
 
\subsubsection{Poroelastic footing}
As a second example, we examine the 2D footing problem proposed in~\cite{Murad.Loula:94,Gaspar.Lisbona.ea:07}, where the domain is assumed to be free to drain and fixed along the base and vertical edges. A uniform load is applied on a central portion of the upper boundary $\Gamma_{N}=\{\vec{x}=(x_1,x_2)\in\partial D : x_2=1\}$ simulating a footing step compressing the medium.
The boundary conditions are defined as
$$
\begin{aligned}
  \ms\normal &= (0, -5\ {\rm kPa})\,\quad \text{on } 
  \Gamma_{N,1}\coloneq\{\vec{x}\in\partial D \st 0.3\le x_1\le 0.7, \ x_2=1\},
  \\
  \ms\normal &=\vec0, \qquad\qquad\quad \text{on } \Gamma_N\setminus\Gamma_{N,1},
  \\
  \vu&=\vec0,\qquad\qquad\quad \text{on }\partial D\setminus\Gamma_N,
  \\
  p &= 0,\qquad\qquad\quad\text{on }\partial D.
\end{aligned}
$$
As $\vf=\vec0$ and $\phi_0=0$, the displacement and pressure fields are determined only by the stress boundary condition. 
We set $k=2$ in the method of~\cite{Boffi.Botti.Di-Pietro:16} and discretize the domain using a triangular mesh with $\pgfmathprintnumber{3584}$ elements. 
We focus here on the pressure profile at early times and, for each sparse grid node, the linear system of dimension $\pgfmathprintnumber{54720}$ is solved twice to reach $\tF=0.2 {\rm s}$. 
Because of the free drainage boundary condition, the fluid is promptly squeezed out of the soil to reach an equilibrium configuration with no pressure and where the displacement balances the Neumann condition.  
We mention, in passing, that the spurious pressure oscillations observed in~\cite{Murad.Loula:94} do not occur with our discretization in agreement with results in~\cite[Section 6.2]{Boffi.Botti.Di-Pietro:16}. 

Figure~\ref{fig:TC2_MeanVar} shows the mean and variance of the pressure field over the deformed domain. The deformation of the domain corresponds again to the mean displacement field. The domain is particularly warped on the load segment and the maximum pressure is found near the center of the domain. As in the previous test case, we observe that the magnitude of the standard deviation is comparable to the mean field range.

\begin{figure}
  \centering
  \begin{minipage}{0.47\textwidth}\centering
    \includegraphics[height=6.2cm]{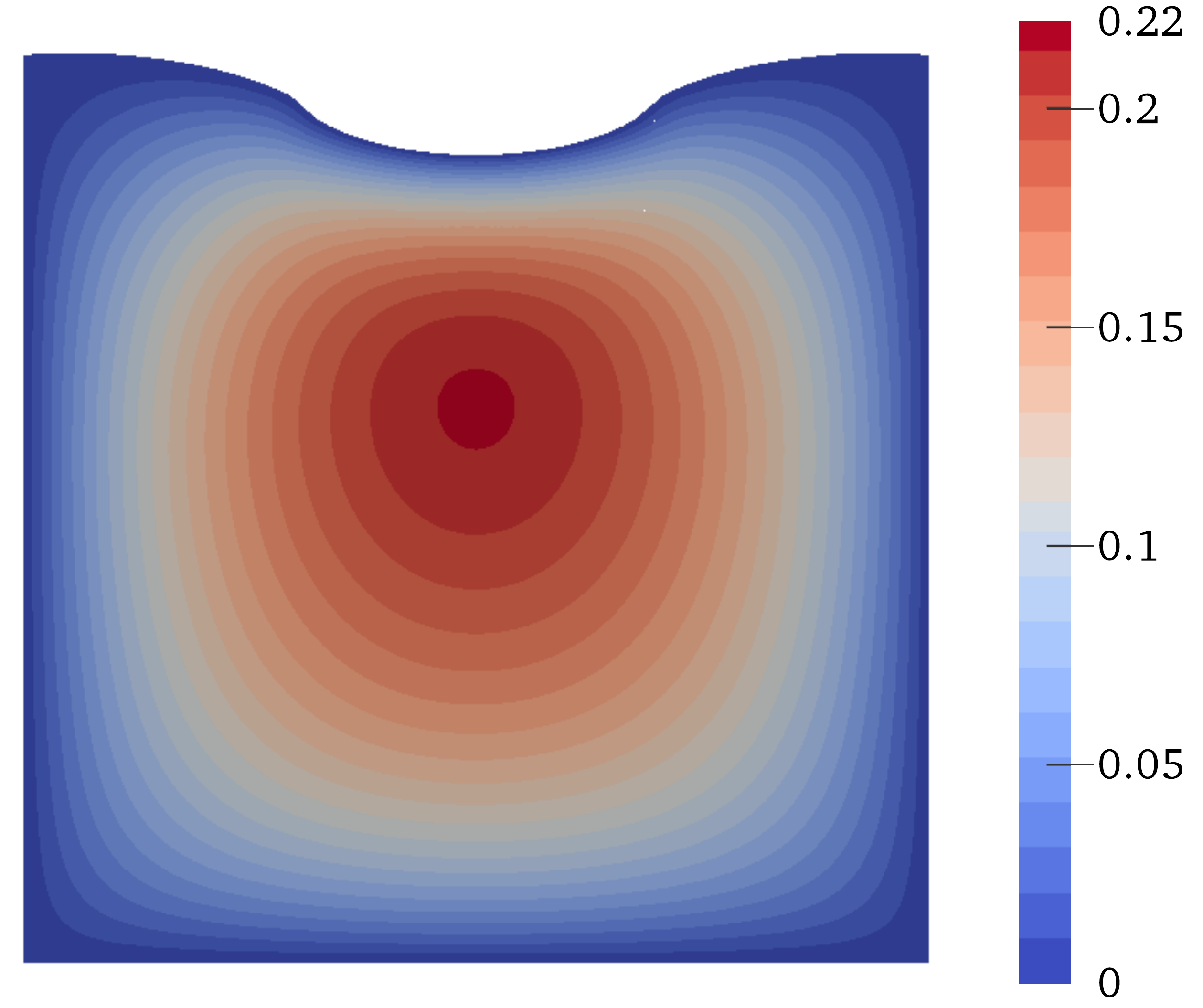}
  \end{minipage} 
  \hspace{2mm}
  \begin{minipage}{0.47\textwidth}\centering
    \includegraphics[height=6.2cm]{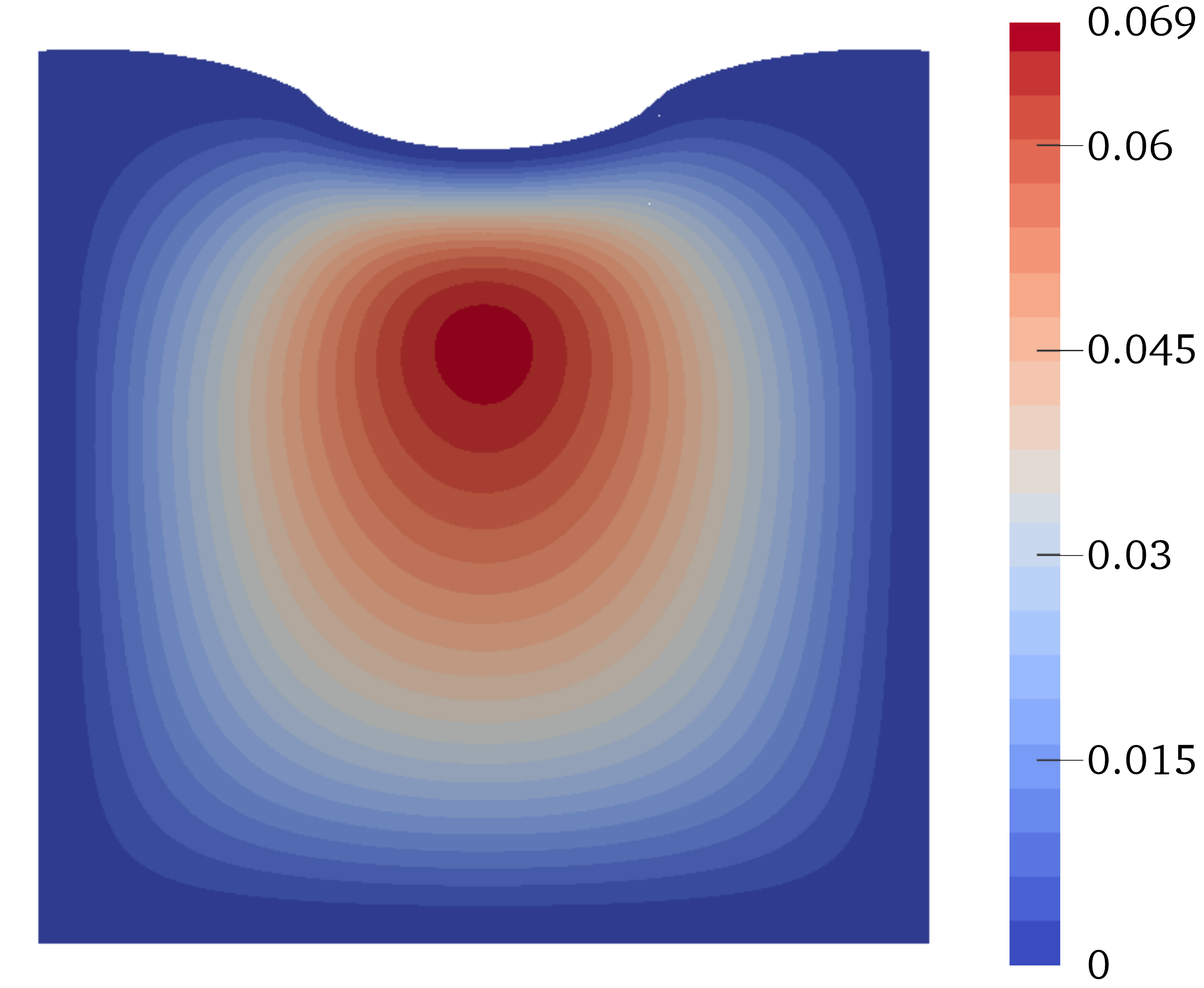}
  \end{minipage}
  \caption{Mean (left) and variance (right) of the pressure field plotted on the deformed domain at $t=0.2 {\rm s}$ (pressures are in ${\rm kPa}$). Solution computed using the PSP method with level $l=5$ and $N_q(l)=2561$.
  \label{fig:TC2_MeanVar}}
\end{figure}

\subsubsection{Convergence analysis}
We evaluate the accuracy of the PC expansions of the stochastic solution by using a validation set independent from the sparse grid nodes. The Mean-Squared Error (MSE) is computed from $N^*=500$ Latin Hypercube Samples (LHS), denoted $\vec\xi^{(i)}$,
$$
{\rm MSE}(X-X_{\mathcal{K}}) (\vec{x}) 
\coloneq\frac{1}{N^*} \sum_{i=1}^{N^*} \left(X(\vec{x},\vec\xi^{(i)})-{X}_{\mathcal{K}}(\vec{x},\vec\xi^{(i)})\right)^2,
$$
where $X(\vec{x},\cdot)$ denotes the exact (pressure or displacement) field and $X_{\mathcal{K}}(\vec{x},\cdot)$  
its PC approximation. 
We plot on Figures~\ref{fig:TC1_ConvMSE} and~\ref{fig:TC2_ConvMSE} the MSE fields of the PC solutions. Owing to the symmetry of each configuration, we only represent the error of the first component of the displacement field for the injection test and the second displacement component for the footing test. The results are plotted for the sparse grids of level $l=1,3,5$, and we observe a rapid decrease of the error for the two test cases and both the displacement and pressure fields. We especially notice the strong decay of the maximum error value with the increasing sparse grid level.
To complete the convergence analysis, we also plot inn Figure~\ref{fig:TC1e2_Conv} the $L^2(D)$-norm of the mean-squared error fields, $\norm[{\Lvec{D}}]{{\rm MSE}(\vu-{\vu}_{\mathcal{K}})}$ and $\norm[L^2(D)]{{\rm MSE}(p-{p}_{\mathcal{K}})}$ up to a sparse grid level equal to 5. 
The linear trends observed in the semi-log scale of the successive PC approximations suggest that the method achieves roughly an exponential convergence rate with respect to the level of the sparse grid, as predicted by the theory for a sufficiently regular quantity of interest.

\begin{figure}
  \centering
  \includegraphics[height=5.5cm]{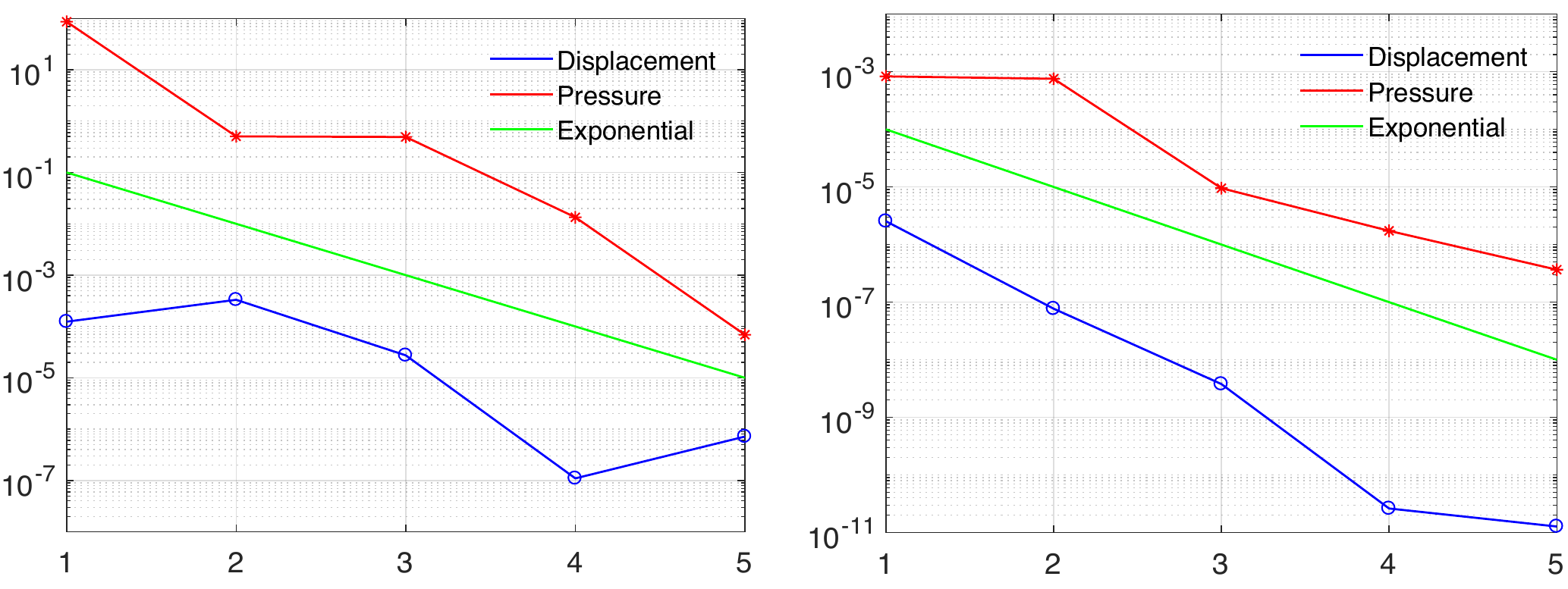}
  \caption{Errors $\norm[{\Lvec{D}}]{{\rm MSE}(\vu-{\vu}_{\mathcal{K}})}$ and $\norm[L^2(D)]{{\rm MSE}(p-{p}_{\mathcal{K}})}$ vs. level $l$ of the Sparse Grid. Point injection test (left) and footing test (right) with model coefficients $\mu,\lambda,\alpha,\kappa$ distributed according to~\eqref{eq:tc_parameters}.\label{fig:TC1e2_Conv}}
\end{figure}
\begin{figure}
  \centering
  \includegraphics[height=20cm]{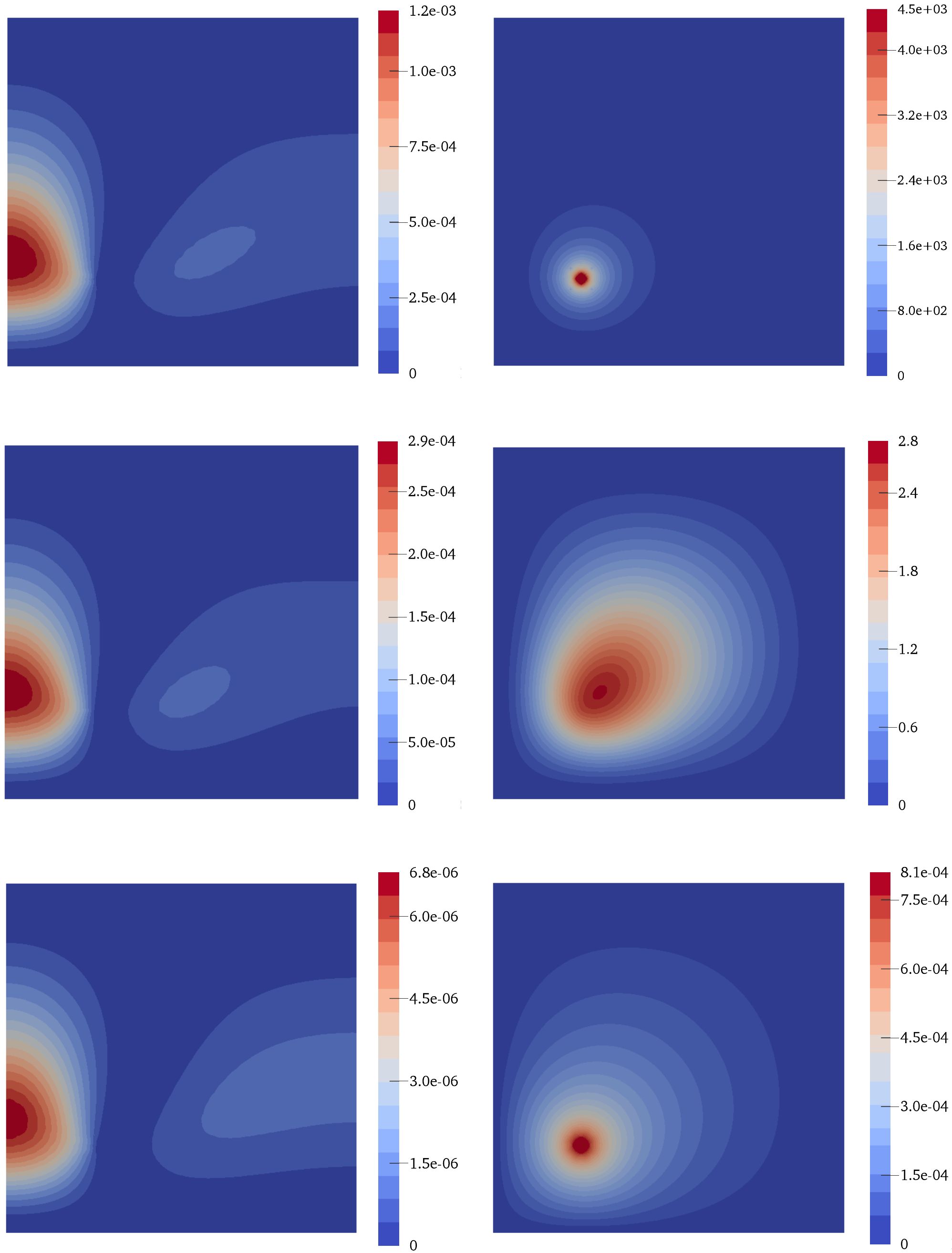}
  \caption{Displacement ${\rm MSE}(u_1-{u}_{\mathcal{K},1})$ field (left) 
    and pressure ${\rm MSE}(p-{p}_{\mathcal{K}})$ field (right) of the injection test case. Results are computed using the PSP method with $l=1$ (top), $l=3$ (middle), and $l=5$ (bottom).
    Notice the change of scales when increasing the sparse grid level.
    \label{fig:TC1_ConvMSE}}
\end{figure}
\begin{figure}
  \centering
  \includegraphics[height=20cm]{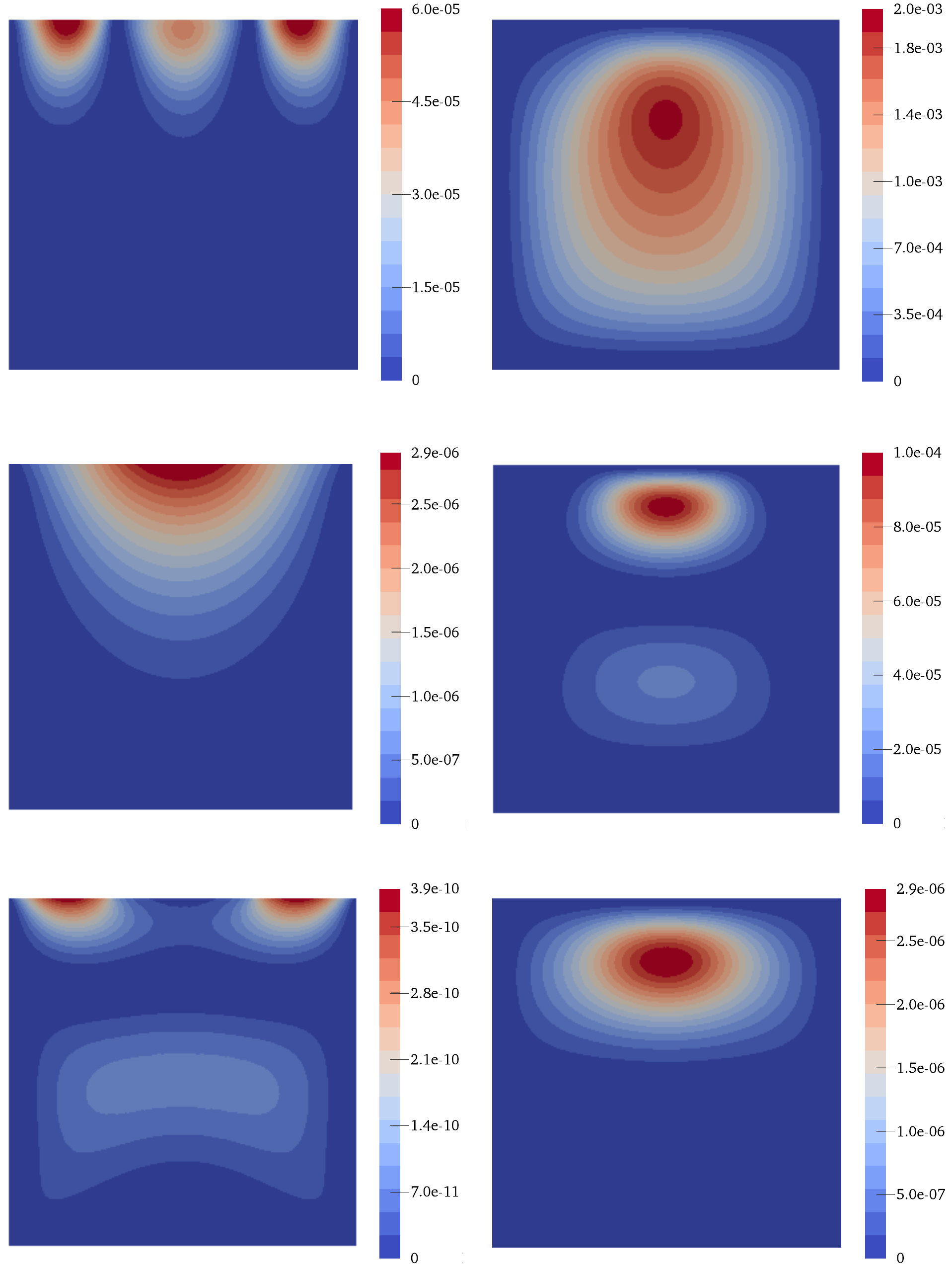}
  \caption{Displacement ${\rm MSE}(u_2-{u}_{\mathcal{K},2})$ field (left) 
  and pressure ${\rm MSE}(p-{p}_{\mathcal{K}})$ field (right) of the footing test case.
  Results computed using the PSP method with $l=1$ (top), $l=3$ (middle), and $l=5$ (bottom).
  Notice the change of scales when increasing the sparse grid level.
  \label{fig:TC2_ConvMSE}}
\end{figure}

\subsubsection{Covariance fields}
We finally analyze the correlation between the displacement and pressure fields. The covariance between two random variables $h_1,h_2\in(\Xi,\mathcal{B}_{\vec\xi},\mathcal{P}_{\vec\xi})$ is defined by $$\Cov(h_1,h_2)\coloneq \left\langle (h_1 -\langle h_1\rangle),(h_2 -\langle h_2\rangle)\right\rangle.$$
In the case of two random variables represeneted on by PC expansions in the same basis, $X_{\mathcal{K}}$ and $Y_{\mathcal{K}}$, the covariance reads
$$
\Cov(X_{\mathcal{K}},Y_{\mathcal{K}})=\sum_{\vec{k}\in\mathcal{K}\setminus\vec0} X_{\vec{k}}Y_{\vec{k}}.
$$

On Figure~\ref{fig:TC1_Cov} we plot the covariance fields of the injection test case. As expected, we observe a symmetry with the domain diagonal of the covariance field between the two displacement components (left plot). The sign of the covariance depends on the joint variation of the variables. 
Indeed the covariance field of ${u}_{\mathcal{K},1}$ and ${u}_{\mathcal{K},2}$ is in accordance with the fact that both $u_1$ and $u_2$ are negative on the bottom-left area, both $u_1$ and $u_2$ are positive on the top-right area, while either $u_1$ or $u_2$ is positive when the other component is negative on the top-left and bottom-right areas of the domain. 
High values of $\Cov({u}_{\mathcal{K},1},p_{\mathcal{K}})$ and $\Cov({u}_{\mathcal{K},2},p_{\mathcal{K}})$ (middle and right panel) are found close to the injection point, where the pressure and mechanical effects of the fluid source are maximum. We also remark that, owing to the symmetric configuration of the injection test, the covariance field $\Cov({u}_{\mathcal{K},1},p_{\mathcal{K}})$ corresponds roughly to the $90^\circ$ clockwise rotation of $\Cov({u}_{\mathcal{K},2},p_{\mathcal{K}})$. 

Figure~\ref{fig:TC2_Cov} represents the covariance fields for the footing test case. 
The covariance field of ${u}_{\mathcal{K},2}$ and $p_{\mathcal{K}}$ (right panel) has the same pattern as the pressure variance field plotted on Figure~\ref{fig:TC2_MeanVar} with the highest value below the loading segment. The covariance is here negative since the pressure increases during the vertical compression. On the middle panel, the covariance of the horizontal displacement ${u}_{\mathcal{K},1}$ and the pressure $p_{\mathcal{K}}$ is lower by one order of magnitude consistently with the vertical loading of this test case. The sign and the location of maximum and minimum values of $\Cov({u}_{\mathcal{K},1},{u}_{\mathcal{K},2})$ and $\Cov({u}_{\mathcal{K},1},p_{\mathcal{K}})$ (left and middle panel) are consistent with the deformed configuration plotted on Figure~\ref{fig:TC2_MeanVar}.
 \begin{figure}
   \centering
  \begin{minipage}{0.32\textwidth}\centering
    \includegraphics[height=4.5cm]{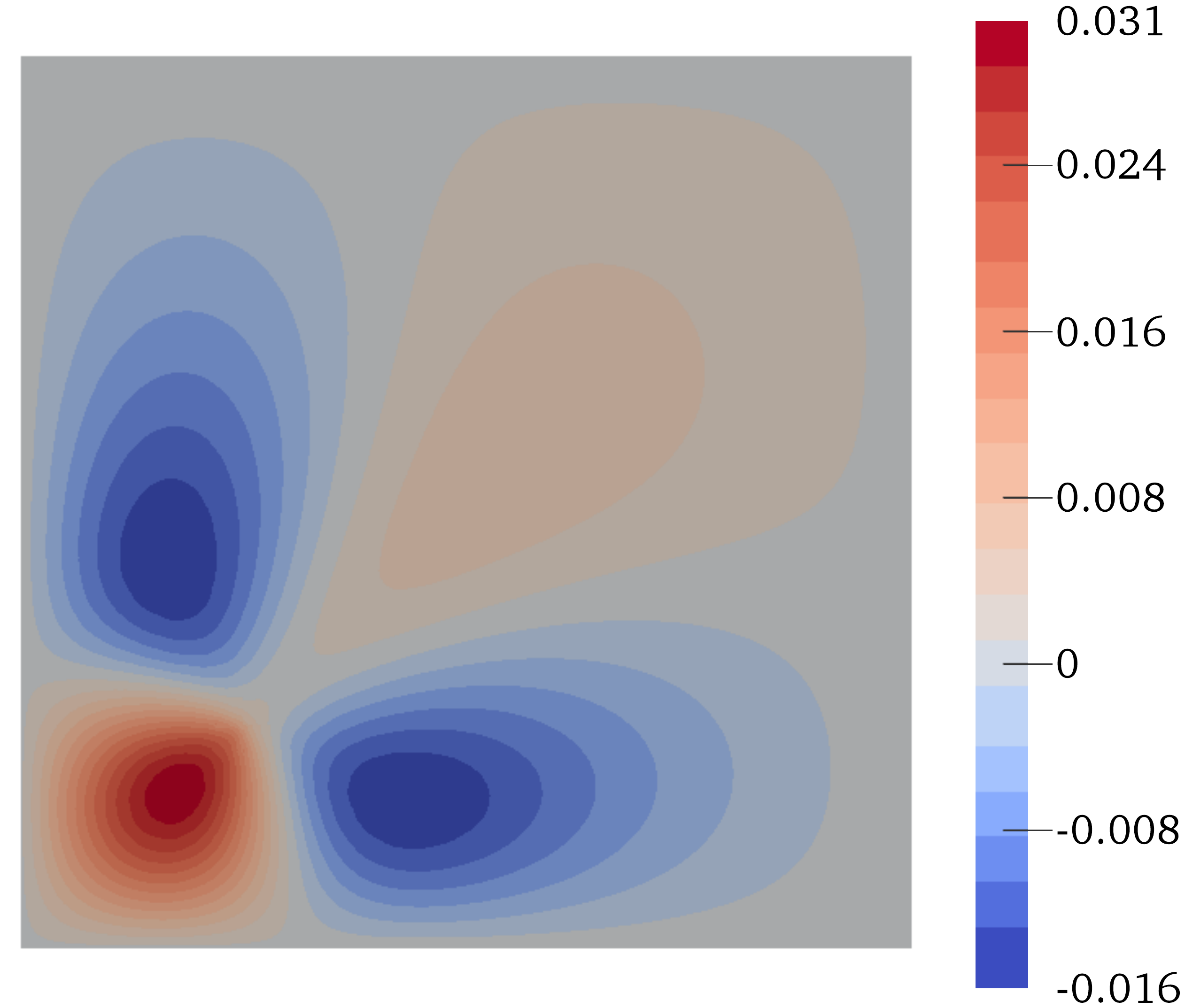}
  \end{minipage}  
  \hspace{1mm}
  \begin{minipage}{0.32\textwidth}\centering
    \includegraphics[height=4.5cm]{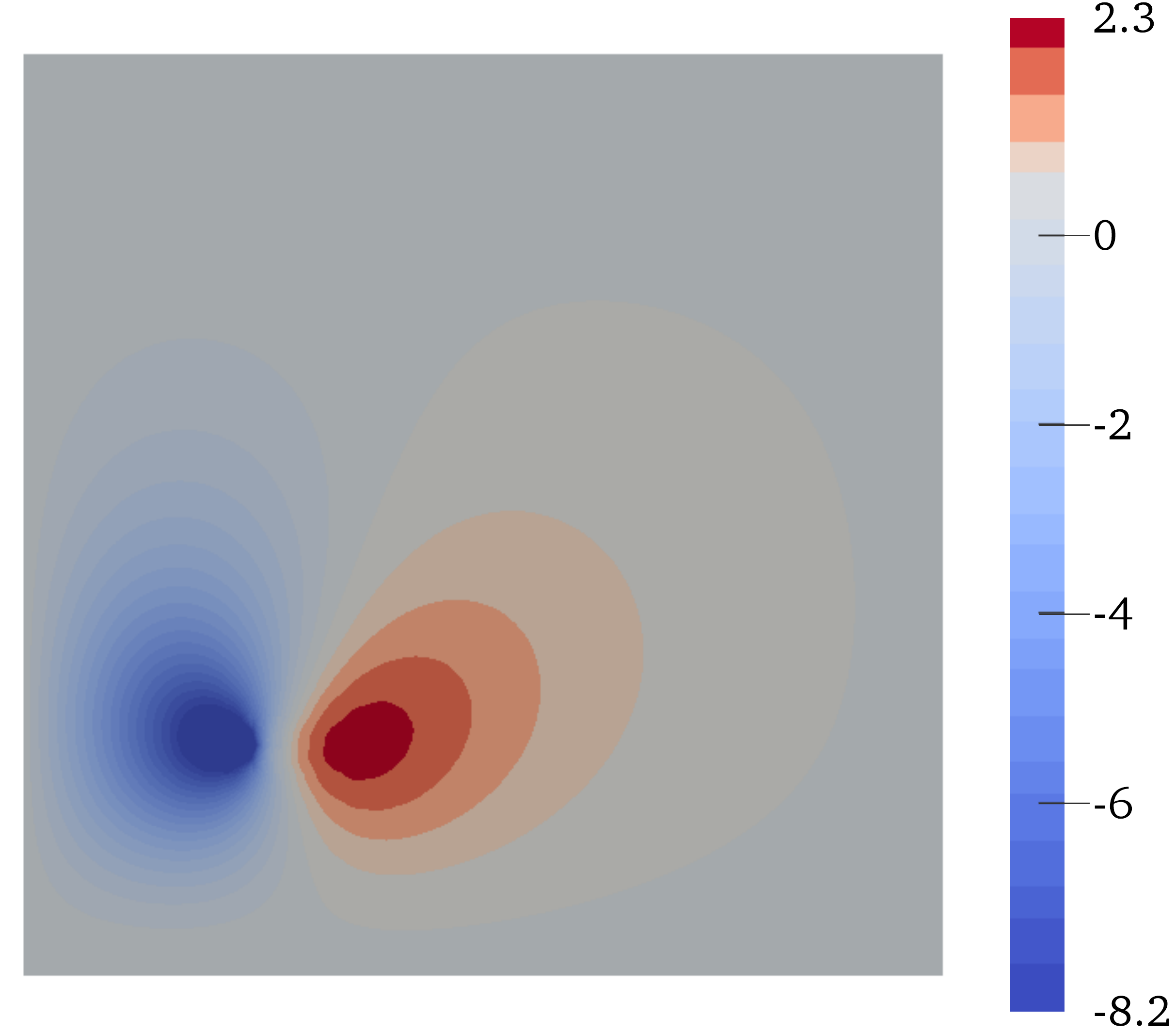}
  \end{minipage}
  \begin{minipage}{0.32\textwidth}\centering
    \includegraphics[height=4.5cm]{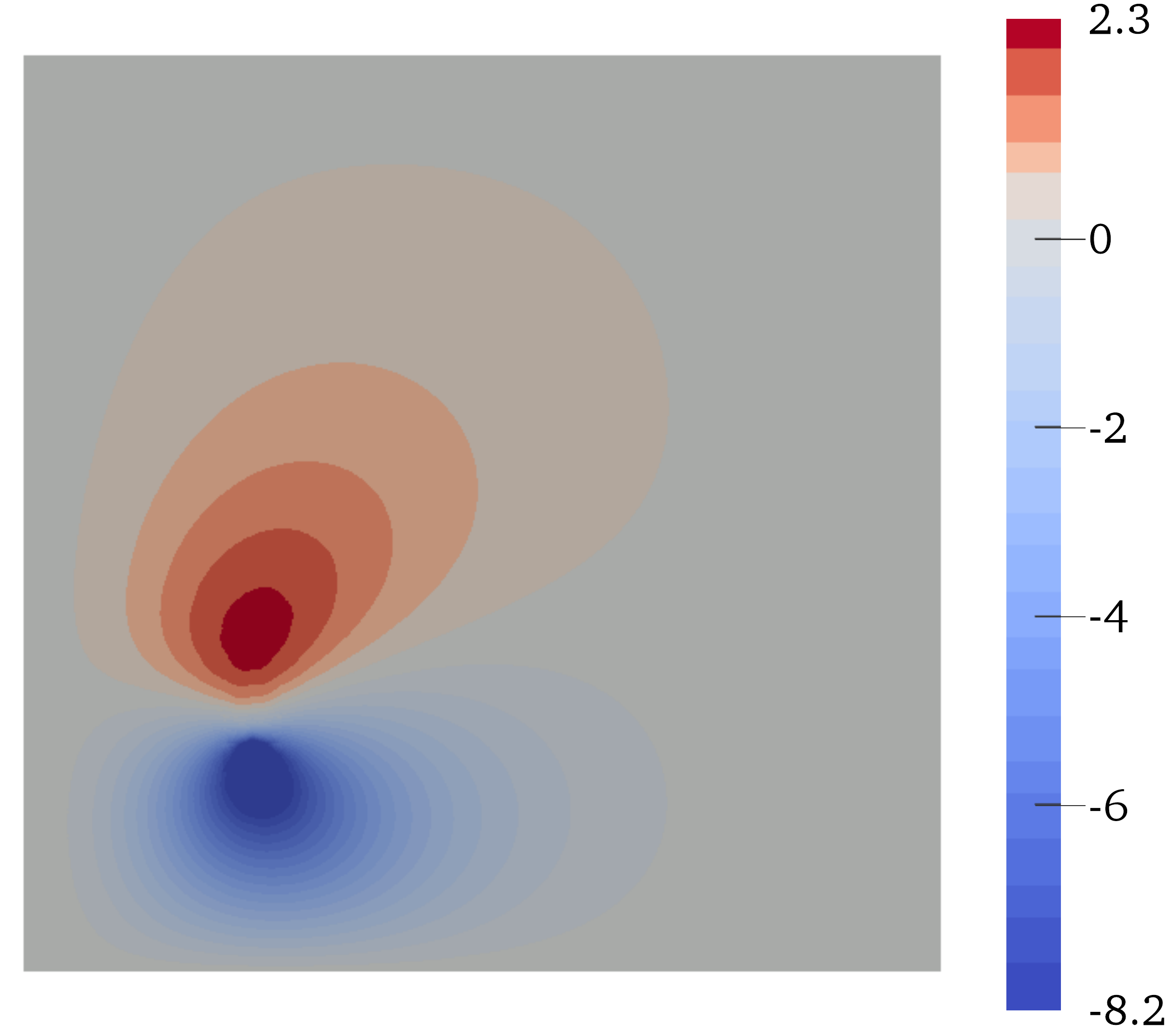}
  \end{minipage}
   \caption{$\Cov({u}_{\mathcal{K},1},{u}_{\mathcal{K},2})\ [{\rm m}^2]$ (left), $\Cov({u}_{\mathcal{K},1},p_{\mathcal{K}})\ [{\rm m}\cdot{\rm kPa}]$ (middle), and $\Cov({u}_{\mathcal{K},2},p_{\mathcal{K}})\ [{\rm m}\cdot{\rm kPa}]$ (right) fields of the injection test case computed at $\tF=1{\rm s}$ using the PSP method with level $l=5$ and $N_q(l)=2561$.}\label{fig:TC1_Cov}
 \end{figure}
 \begin{figure}
   \centering
  \begin{minipage}{0.32\textwidth}\centering
    \includegraphics[height=4.5cm]{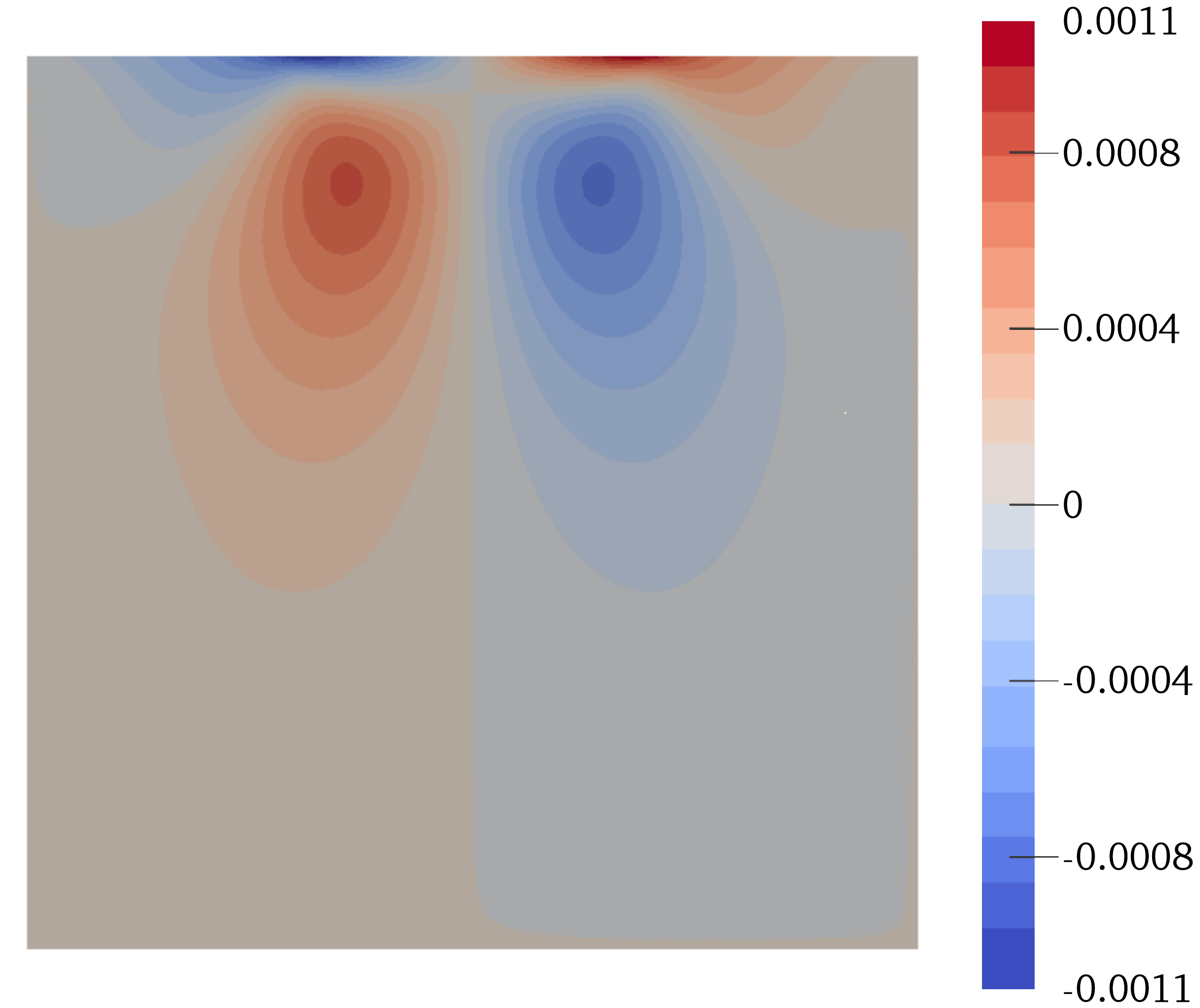}
  \end{minipage} 
  \hspace{1mm}
  \begin{minipage}{0.32\textwidth}\centering
    \includegraphics[height=4.5cm]{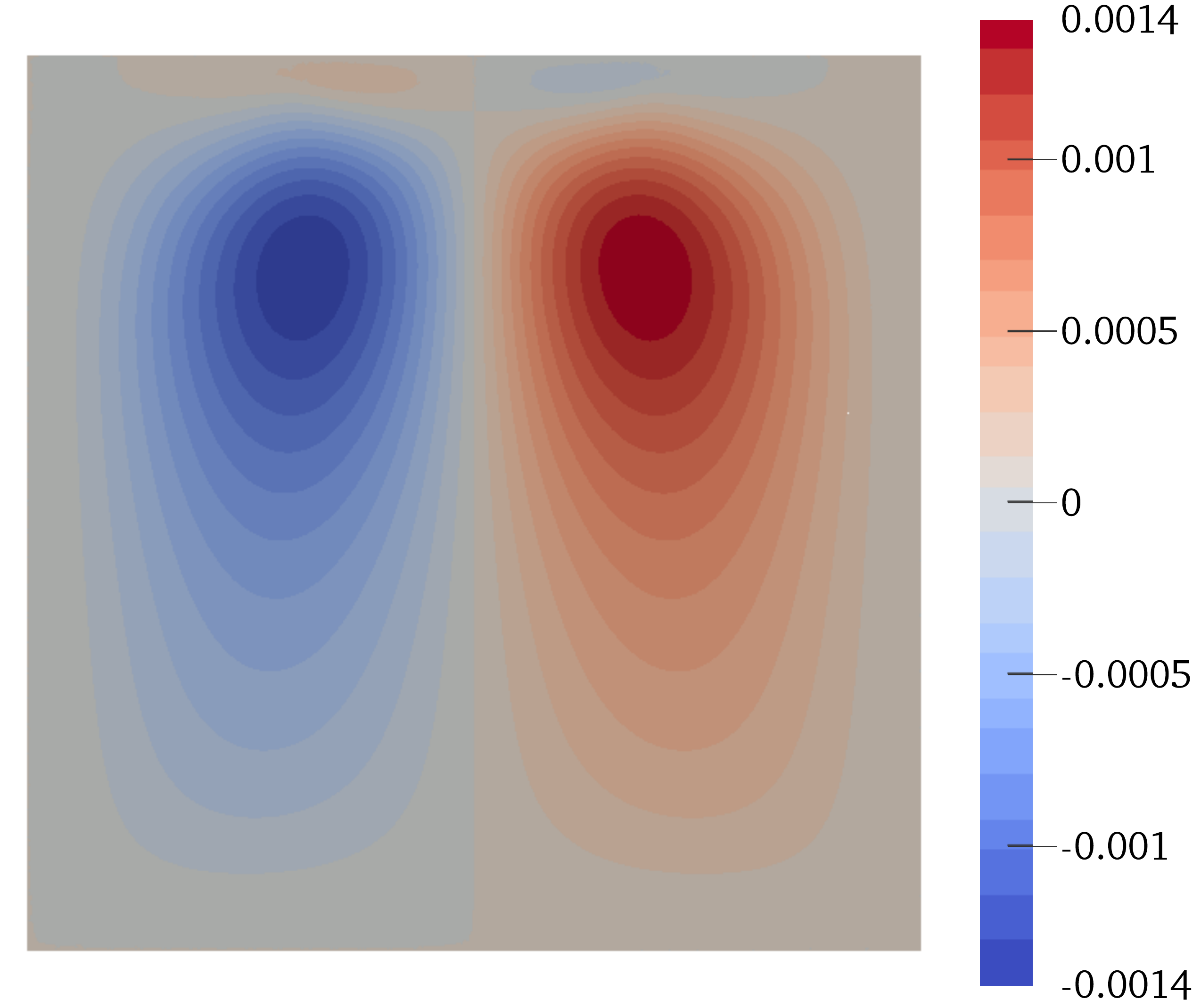}
  \end{minipage}
  \hspace{1mm}
  \begin{minipage}{0.32\textwidth}\centering
    \includegraphics[height=4.5cm]{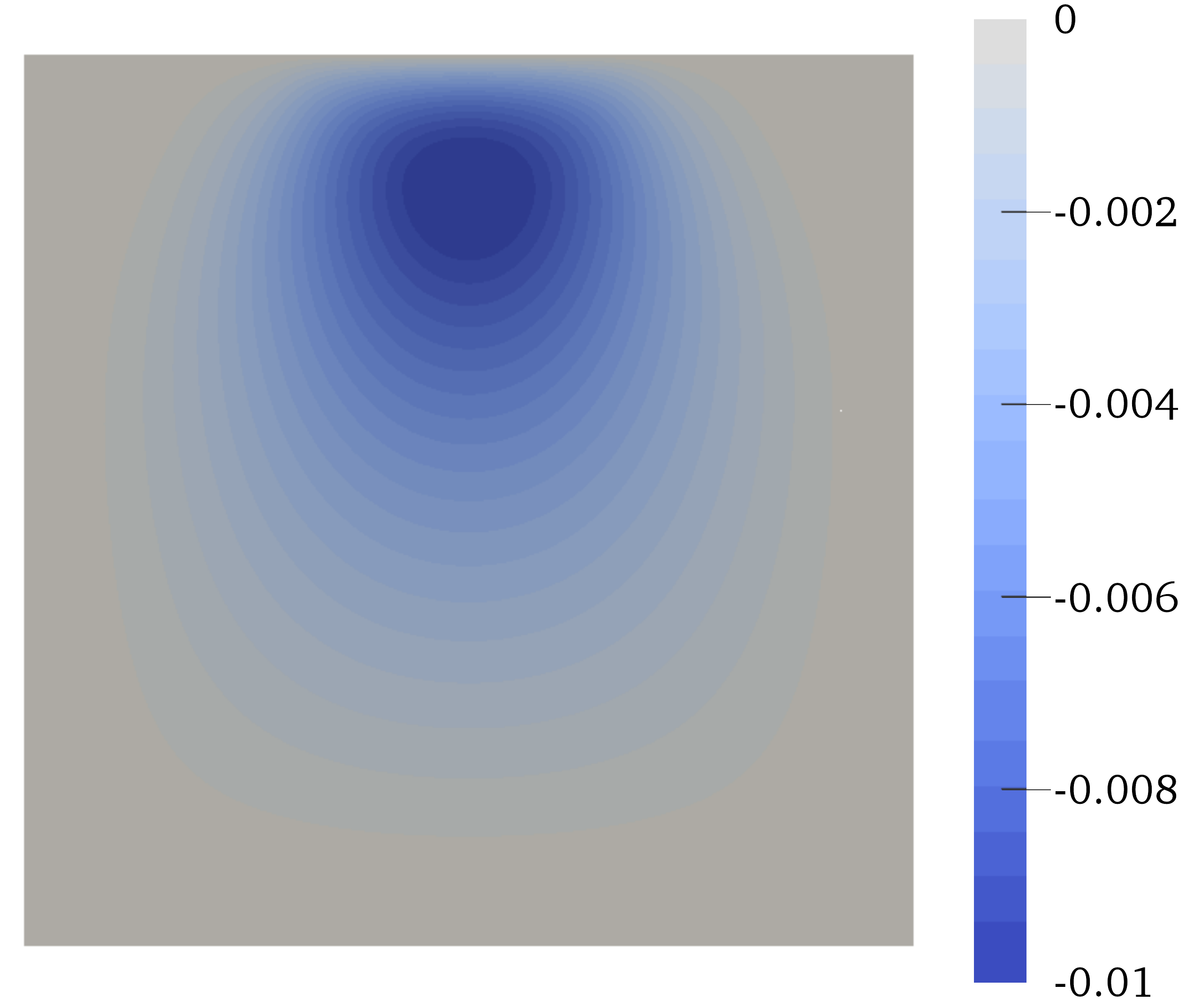}
  \end{minipage}
   \caption{$\Cov({u}_{\mathcal{K},1},{u}_{\mathcal{K},2})\ [{\rm m}^2]$ (left), $\Cov({u}_{\mathcal{K},1},p_{\mathcal{K}})\ [{\rm m}\cdot{\rm kPa}]$ (middle), and $\Cov({u}_{\mathcal{K},2},p_{\mathcal{K}})\ [{\rm m}\cdot{\rm kPa}]$ (right) fields of the footing test case computed at $t=0.2 {\rm s}$ using the PSP method with level $l=5$ and $N_q(l)=2561$.}\label{fig:TC2_Cov}
 \end{figure}
\subsection{Realistic problem}
The last numerical experiment deals with a fluid injection-extraction process and is inspired by~\cite[Section 7]{Kolesov.Vabishchevich.ea:14}. Since we aim to assess the propagation of the uncertainty of input parameters on the solution, we assume that all the coefficients $\mu,\lambda,\alpha$ and $\kappa$ are log-uniformly distributed with the same coefficient of variation ${\rm c_v}(\mu)={\rm c_v}(\lambda)={\rm c_v}(\alpha)={\rm c_v}(\kappa)\sim 0.63$. We set
 \begin{equation}\label{eq:tc3_parameters}
   \begin{aligned}
     \mu(\vec{\xi}) &= 3.75\cdot10^{\frac{\xi_1+1}2} \ {\rm GPa},
     \\
     \lambda(\vec{\xi}) &= 2.5\cdot10^{\frac{\xi_2+1}2} \ {\rm GPa},
     \\
     \alpha(\vec{\xi}) &= 10^{\frac{\xi_3-1}2},
     \\
     \kappa(\vec{\xi}) &= 5\cdot10^{\frac{\xi_4-3}2} \ {\rm Km}^2 {\rm GPa}^{-1} {\rm day}^{-1}.
   \end{aligned}
 \end{equation}
We remark that all the possible realizations of $\alpha$ satisfy~\eqref{eq:bnd.Zimmerman} if $\varphi\le\nicefrac{2}{29}$.
Thus, we consider a medium with reference porosity $\varphi=\nicefrac{2}{29}$ that is filled with water, namely $\Kf=2.2\ {\rm GPa}$. For each realization of the poromechanical coefficients, $c_0$ is again computed according to~\eqref{eq:2order_c0}.
We observe that the average values of $\mu$ and $\lambda$ in~\eqref{eq:tc3_parameters} have the same magnitude as the mechanical coefficients considered in~\cite[Table 1]{Kolesov.Vabishchevich.ea:14} and, assuming that $\mu_{\rm f}=10^{-3}\ {\rm Pa}\cdot \rm{s}$ (water), the average value of $\kappa$ corresponds to an intrinsic permeability of order $1 {\rm D}=10^{-12} {\rm m^2}$. 

\subsubsection{Computational case description}
The rectangular domain $D=[0, 4\, {\rm Km}]\times[0, 1\, {\rm Km}]$ includes two holes representing the injection and extraction wells (see Figure~\ref{fig:TC3_MeshMean}). 
For the sake of simplicity, we impose steady Dirichlet conditions on the holes boundaries, 
$$
\begin{aligned}
  p &= p_1,\quad\text{on }\Gamma_{{\rm d},1},
  \\
  p &= p_2,\quad\text{on }\Gamma_{{\rm d},2},
\end{aligned}
$$
where $\Gamma_{{\rm d},1}$ and $\Gamma_{{\rm d},2}$ denote the left and right hole boundaries, respectively. 
We remark that, due to the linearity of the Biot model, any solution with the conditions above can be reconstructed from a combination of the solutions of two elementary problems: one with $p_1=0, p_2=1$ and another with $p_1=1, p_2=0$.
We treat in this section the case $p_2=-p_1$ and consider the following set of boundary conditions:
$$
\begin{aligned}
  \ms\normal &= \vec0, \;\qquad\qquad \text{on}\ \Gamma_{N}\coloneq\{\vec{x}\in\partial D \st x_2=1\ {\rm Km}\},
  \\
  \vu&=\vec0, \;\qquad\qquad \text{on}\ \Gamma_{D}\coloneq\Gamma_{{\rm d},1}\cup\Gamma_{{\rm d},2}\cup\{\vec{x}\in\partial D \st x_2=0\},
  \\
  \ms\normal\cdot\vec{\tau} &=0, \;\qquad\qquad
  \text{on}\ \partial D\setminus(\Gamma_N\cup\Gamma_D\cup\Gamma_{{\rm d},1}\cup\Gamma_{{\rm d},2}),
  \\
  \vu\cdot\normal&=0, \;\qquad\qquad
  \text{on}\ \partial D\setminus(\Gamma_N\cup\Gamma_D\cup\Gamma_{{\rm d},1}\cup\Gamma_{{\rm d},2}),
  \\
  \kappa\GRAD p &= \vec0, \;\qquad\qquad \text{on}\ \partial D\setminus(\Gamma_{{\rm d},1}\cup\Gamma_{{\rm d},2}),
   \\
   p &= -100\, {\rm kPa}, \quad \text{on}\ \Gamma_{{\rm d},1},
   \\
   p &= 100\, {\rm kPa}, \;\ \,\quad \text{on}\ \Gamma_{{\rm d},2},
\end{aligned}
$$
where $\vec{\tau}$ denotes again the tangent vector on $\partial D$ with arbitrary orientation.
The loading term $\vf$ in~\eqref{eq:SysBiot1}, the fluid source $g$ in~\eqref{eq:SysBiot2}, and the initial condition $\phi_0$ in~\eqref{eq:SysBiot:initial} are all set to zero.
We use $k=1$ in the method of~\cite{Boffi.Botti.Di-Pietro:16} with the Voronoi mesh having $\pgfmathprintnumber{10000}$ elements depicted in Figure~\ref{fig:TC3_MeshMean}. This choice yields a spatial discretization of dimension $\pgfmathprintnumber{151240}$. The steady state of the Biot model is considered achieved at $\tF=1 \ {\rm day}$, simulated with $10$ time steps. The PC approximations are computed using the PSP method with level $l=3$ for a total of $N_q(l)=209$ sparse grid nodes.

\begin{figure}
  \centering
    \includegraphics[scale=0.7]{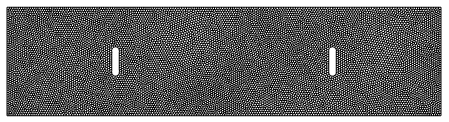}
  \caption{Mesh of the injection-extraction test case. The construction uses the \textsf{PolyMesher} algorithm of~\cite{Talischi:12}.}\label{fig:TC3_MeshMean}
\end{figure}

Figure~\ref{fig:TC3_Mean} shows the mean of the pressure (left) and vertical displacement fields (right). We verify that the mean pressure is equal to deterministic the boundary conditions, namely $\pm 100\, {\rm kPa}$, at the holes boundary. It increases linearly between the two holes (in the $x$-direction) and remains nearly uniform away from the holes. Regarding the displacement, within vertical sections, the vertical component increases from the bottom of the domain (where it is equal to the homogeneous boundary condition) to its maximum value at the free surface. The mean displacement is negative on the left part of the domain (extraction side) and positive on the right part (injection side), consistently with the imposed pressure boundary conditions. Further, the mean displacement vanishes at the vertical centerline of the domain, owing to the symmetry of the problem. 

\begin{figure}
\centering
\includegraphics[scale=0.44]{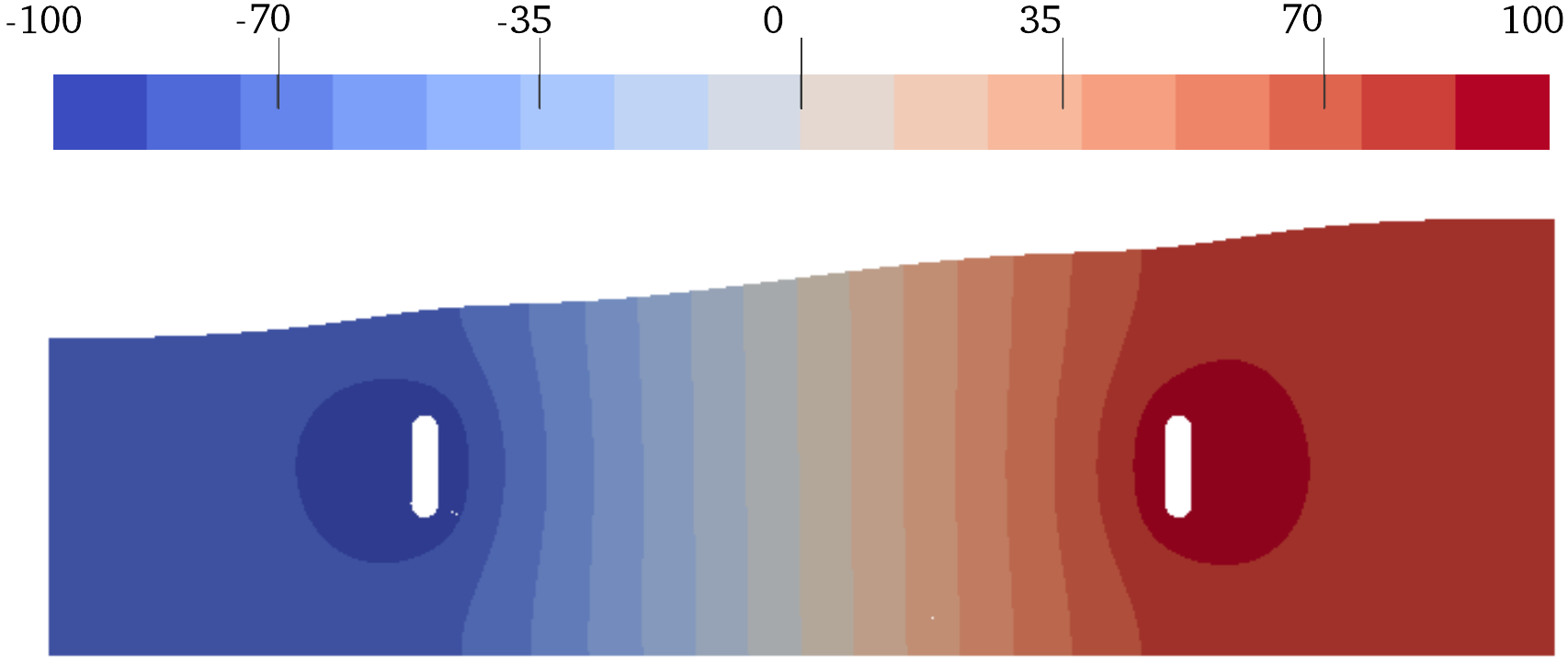}
\hspace{2mm}
\includegraphics[scale=0.44]{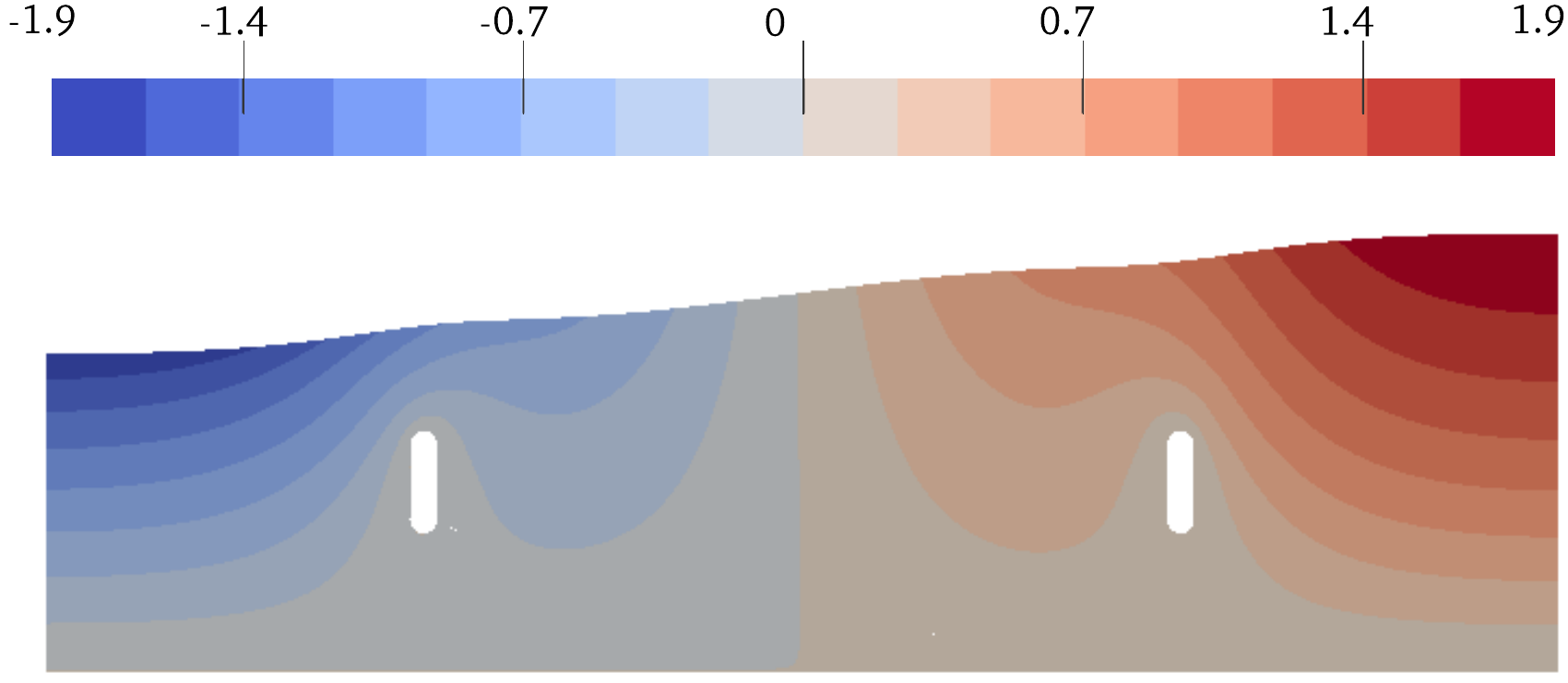}
\caption{Mean pressure  field in ${\rm kPa}$ (left) and mean vertical displacement in ${\rm mm}$ (right) for the injection-extraction test case.
Fields are plotted on the mean domain (deformation scaled by a factor $10^5$) at steady state.}\label{fig:TC3_Mean}
\end{figure} 

Figure~\ref{fig:TC3_Var} shows the variance fields of the Biot solution, on the left column, and the three covariance fields, in the right column.
The pressure and vertical displacement variances are small in between the two holes. 
Indeed, for the considered configuration, the pressure profile at steady state is linear in the $x$-direction and mostly independent of the model coefficients. 
Thus the pressure variance is maximal at the lateral boundaries where the uncertainty in poroelastic coefficients is the most significant.
In contrast, the horizontal displacement variance is the highest between the holes (note that the range of $\Var({u}_{\mathcal{K},1})$ is half that of $\Var({u}_{\mathcal{K},2})$).

The three covariance fields display different features. 
Concerning the correlations $\Cov({u}_{\mathcal{K},1},p_{\mathcal{K}})$ and $\Cov({u}_{\mathcal{K},2},p_{\mathcal{K}})$, it is seen that the later reaches higher magnitudes at the top corners of the domain, as one could have guessed from the variance fields. Further, the pressure and vertical displacement are anti-correlated everywhere in the domain, when the horizontal displacement and pressure have a covariance that alternates sign in the domain.
Finally, consistently with the symmetry of the problem, the covariance between the two displacement components is negative (resp. positive) in the injection (resp. extraction) side of the domain.
\begin{figure}
  \centering
  \begin{minipage}[b]{0.48\textwidth}
    \includegraphics[scale=0.23]{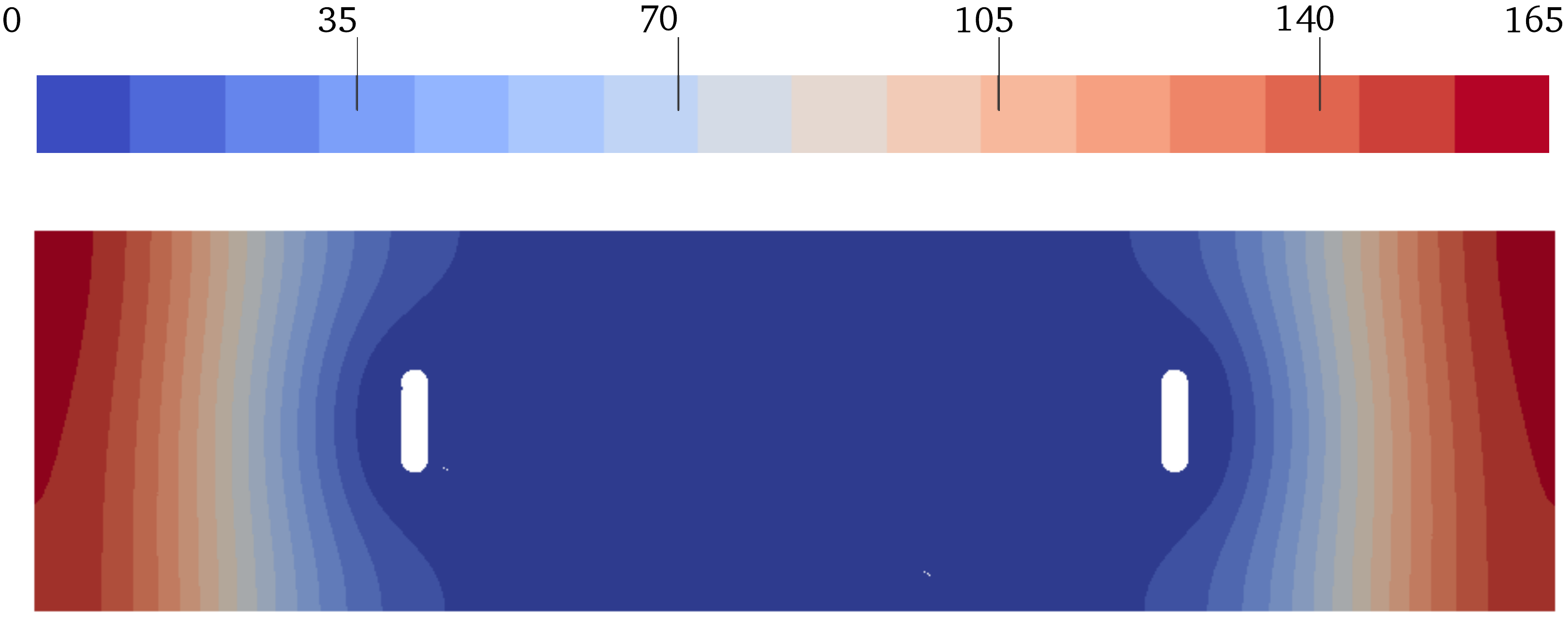}
    \vspace{-2mm}
    \subcaption{$\Var(p_{\mathcal{K}})\ [{\rm kPa}]$}
  \end{minipage}
  \hspace{2mm}
  \begin{minipage}[b]{0.48\textwidth}
    \includegraphics[scale=0.23]{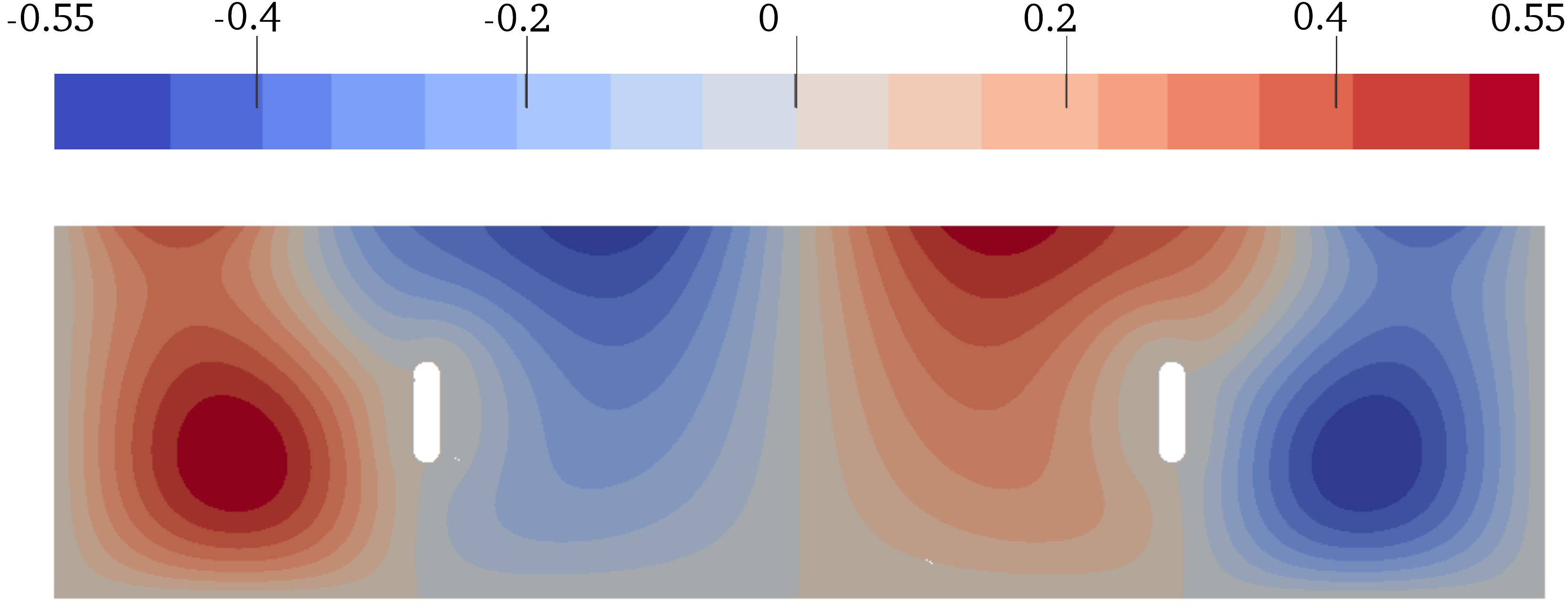}
    \vspace{-2mm}
    \subcaption{$\Cov({u}_{\mathcal{K},1},p_{\mathcal{K}})\ [{\rm mm}\cdot{\rm kPa}]$}
  \end{minipage}
  \vspace{2mm} \\
  \begin{minipage}[b]{0.48\textwidth}
    \includegraphics[scale=0.23]{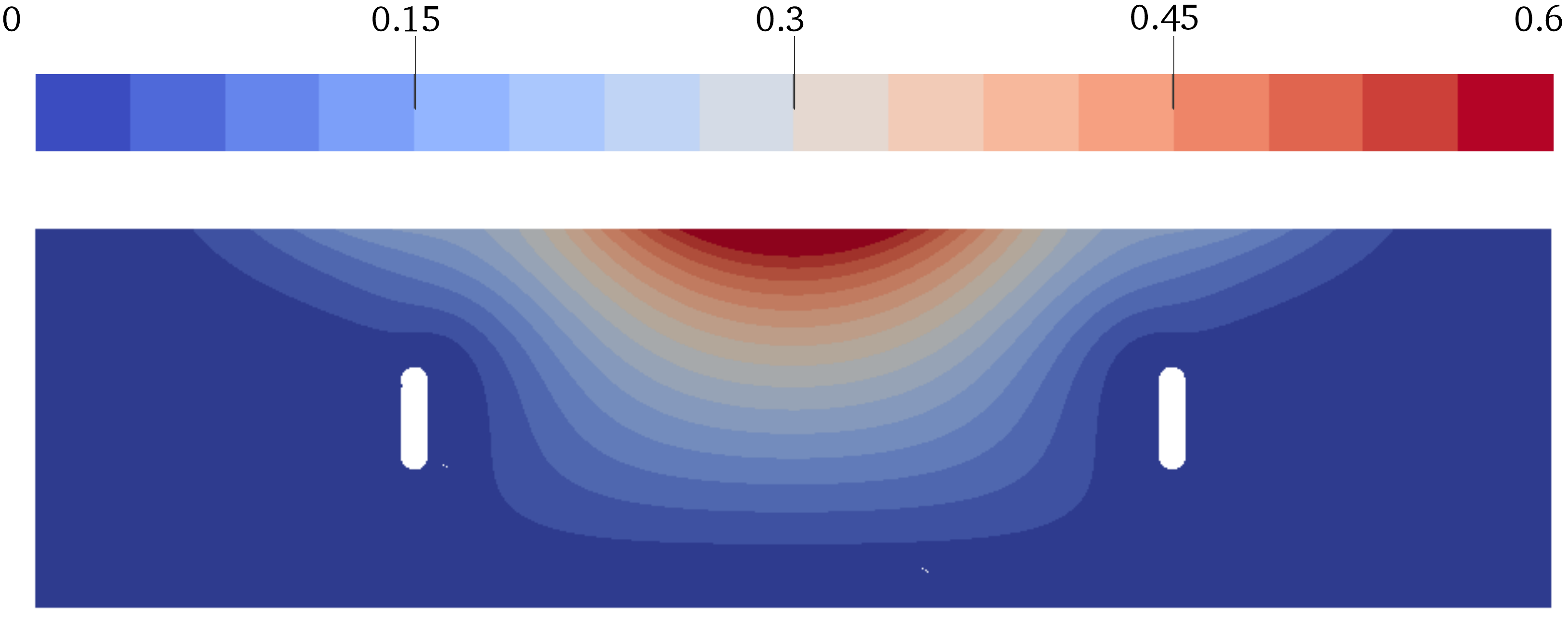}
    \vspace{-2mm}
    \subcaption{$\Var({u}_{\mathcal{K},1})\ [{\rm mm}^2]$}
  \end{minipage}
  \hspace{2mm}
  \begin{minipage}[b]{0.48\textwidth}
    \includegraphics[scale=0.23]{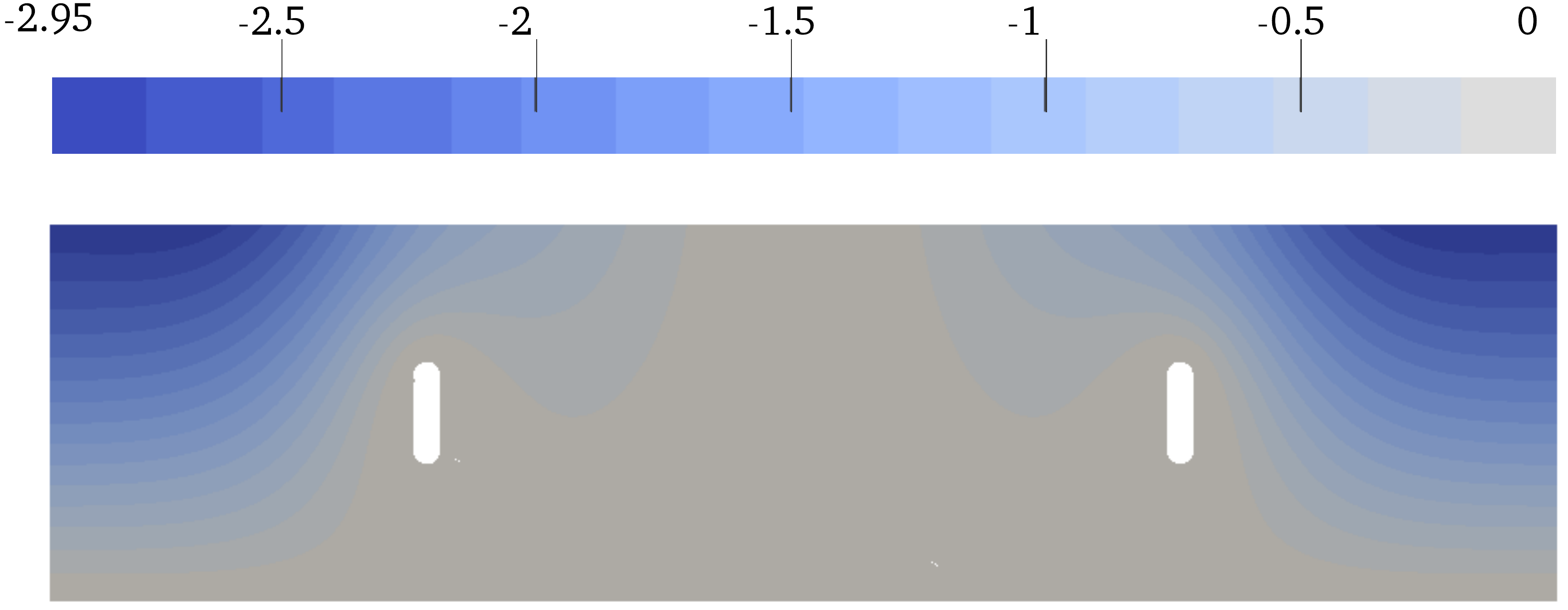}
    \vspace{-2mm}
    \subcaption{$\Cov({u}_{\mathcal{K},2},p_{\mathcal{K}})\ [{\rm mm}\cdot{\rm kPa}]$}
  \end{minipage}
  \vspace{2mm} \\
  \begin{minipage}[b]{0.48\textwidth}
    \includegraphics[scale=0.23]{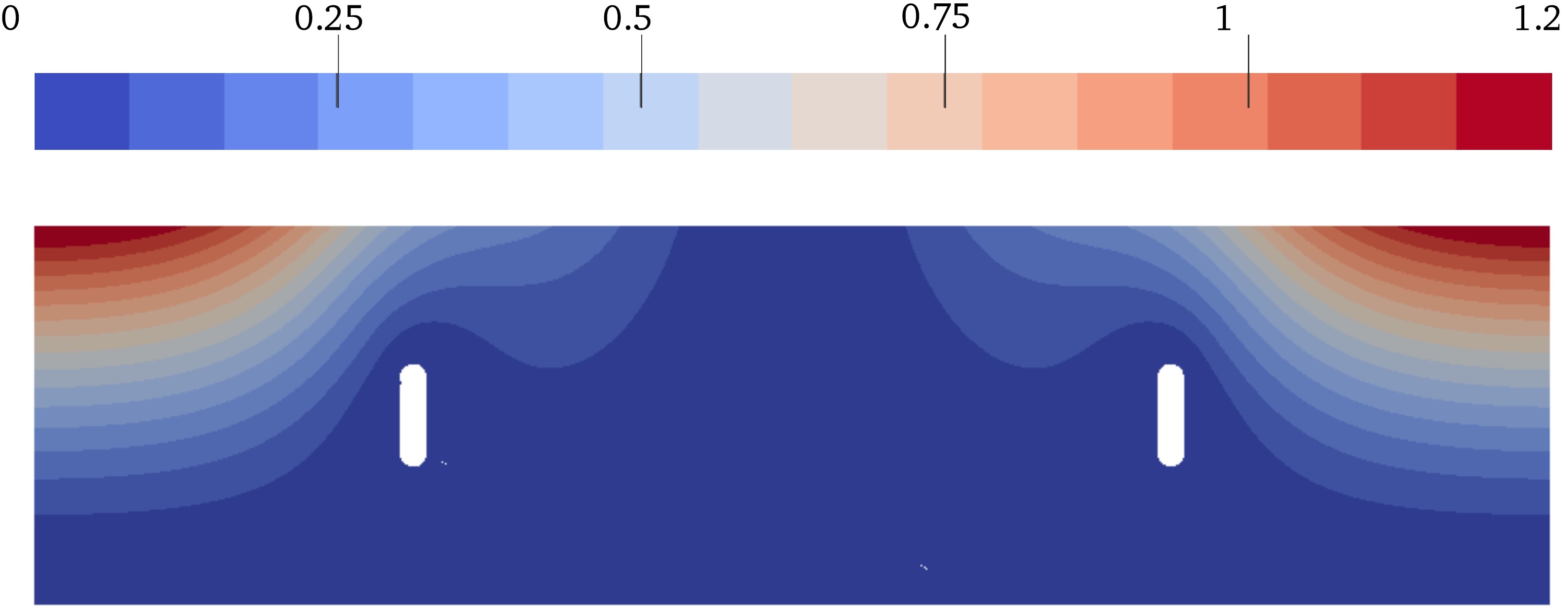}
    \vspace{-2mm}
    \subcaption{$\Var({u}_{\mathcal{K},2})\ [{\rm mm}^2]$}
  \end{minipage}
  \hspace{2mm}
  \begin{minipage}[b]{0.48\textwidth}
    \includegraphics[scale=0.23]{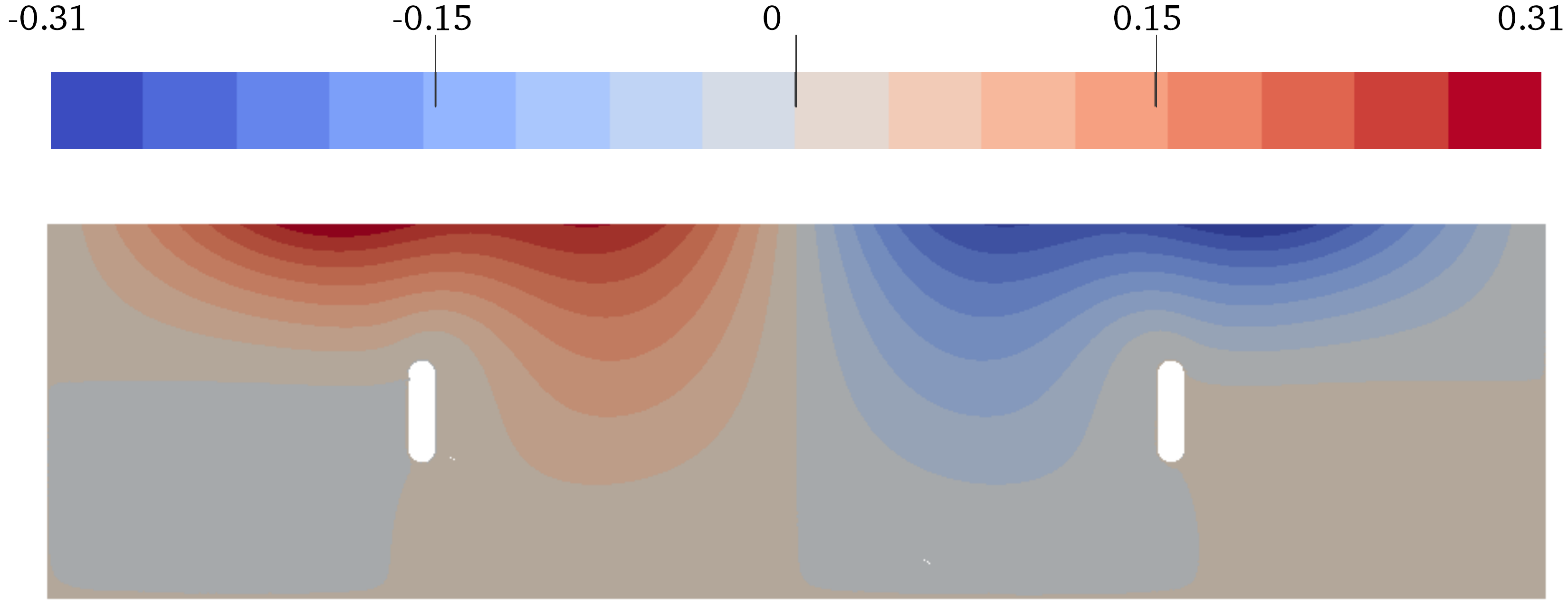}
    \vspace{-2mm}
    \subcaption{$\Cov({u}_{\mathcal{K},1},{u}_{\mathcal{K},2})\ [{\rm mm}^2]$}
  \end{minipage}
  \vspace{2mm} \\
  \caption{Variance (left) and covariance (right) fields for the steady state injection-extraction test case  computed using the PSP method with level $l=3$ and $N_q(l)=209$.}\label{fig:TC3_Var}
\end{figure}

\subsubsection{Sensitivity analysis} 
We conclude the numerical tests by evaluating the different contributions of the input parameters on the variance of the solution. We are mainly interested here in illustrating the information carried by the PC expansion to assess the effect of the random coefficients on the deformation caused by the injection-extraction. 
We perform the sensitivity analysis of the vertical displacement only. Similar analyses can be conducted for the other components of the solution. We compute the first-order partial variances $\Var_i(u_{\mathcal{K},2}(\vec{x},\vec{\xi}))$ defined by
$$
  \Var_i(u_{\mathcal{K},2}) \coloneq \Var(\Esp(u_{\mathcal{K},2}|\xi_i)).
$$
As for the variance, these partial variances can be explicitly derived from the PC expansion of the considered field~\cite{Crestaux.Le-Maitre.ea:09}, and we use here the level $l=3$ sparse grid PSP approximation.
Since each $\xi_i$ in~\eqref{eq:tc3_parameters} parametrizes only one of the poromechanical model coefficients, the first-order variances characterize the contributions to the variance of
the individual poro-mechanical parameters ($\mu(\xi_1)$, $\lambda(\xi_2)$, $\alpha(\xi_3)$ and $\kappa(\xi_4)$) on the displacement. 
We also report the total-order variance associated to the $\xi_i$, denoted $\Var_{T_i}(u_{\mathcal{K},2})$, which are defined by
$$
  \Var_{T_i}(u_{\mathcal{K},2}) \coloneq \Var(u_{\mathcal{K},2})-
  \Var(\Esp(u_{\mathcal{K},2}|\vec{\xi}_{\setminus i})),
$$
where we have denoted $\vec{\xi}_{\setminus i}=\left( \xi_{j\ne i}\right)$ the triplet containing all the random variables but $\xi_i$.
The total-order variance $\Var_{T_i}$ measure the variability induced by $\xi_i$ (that is the poromechanical coefficient it parametrizes) including all its interactions with other uncertainty sources.

In Figure~\ref{fig:VarComp_TC3} we compare the total variance $\Var(u_{\mathcal{K},2})$ with the sum of the first-order partial variances $\sum_{i=1}^4\Var_i(u_{\mathcal{K},2})$ of the vertical displacement. The difference between these two fields amounts to the contribution of higher order partial variances, namely, the part of the variance incurring to interactions between random coefficients. Here, these interactions are seen to be small but not negligible. We expect coupled effects between uncertain parameters in the present uncertainty model, because of the dependence of $c_0$ on the three first coefficients (see~\eqref{eq:2order_c0}).  
In Figure~\ref{fig:sens_TC3} we plot the first-order (left half) and total-order partial variances (right half) related to each input parameter. 
We observe that the parameter with the main influence on the vertical displacement is the shear modulus $\mu$ (parametrized by $\xi_1$), while the effects of the dilatation modulus $\lambda$ (parametrized by $\xi_2$) and hydraulic mobility $\kappa$ (parametrized by $\xi_4$) are much weaker. In particular, we notice that $\kappa$ has almost no influence on the solution variability. Finally, we remark that the effect of the uncertain Biot-Willis coefficient $\alpha$ on the vertical displacement at the surface is less important than the one associated to  $\mu$, but considerably larger than the one related to $\lambda$.
\begin{figure}
   \centering
   \includegraphics[scale=0.76]{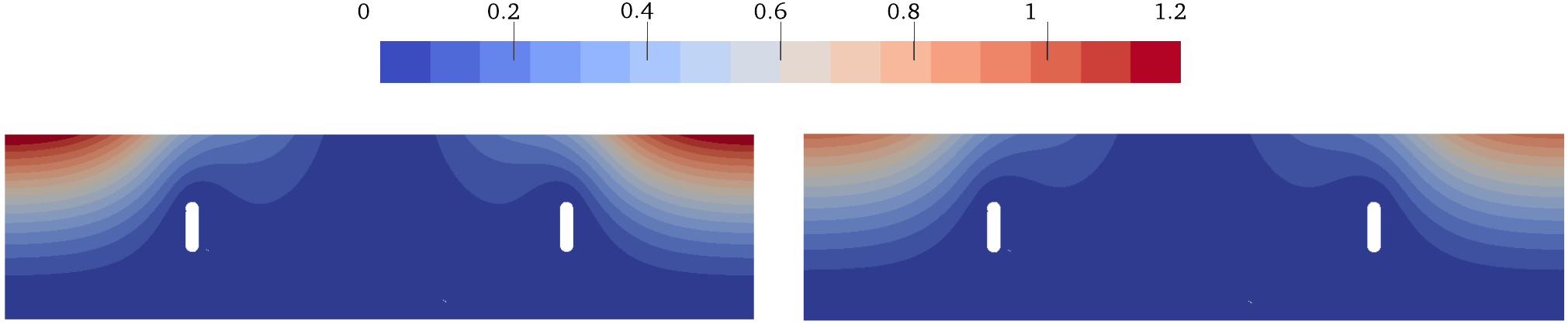}
   \caption{Comparison between the total variance $\Var(u_{\mathcal{K},2})$ and 
   the sum of the first-order partial variances $\sum_i \Var_i(u_{\mathcal{K},2})$.
   The variances are reported in ${\rm mm}^2$. 
   \label{fig:VarComp_TC3}}
\end{figure} 
\begin{figure}
  \centering
    \includegraphics[scale=0.74]{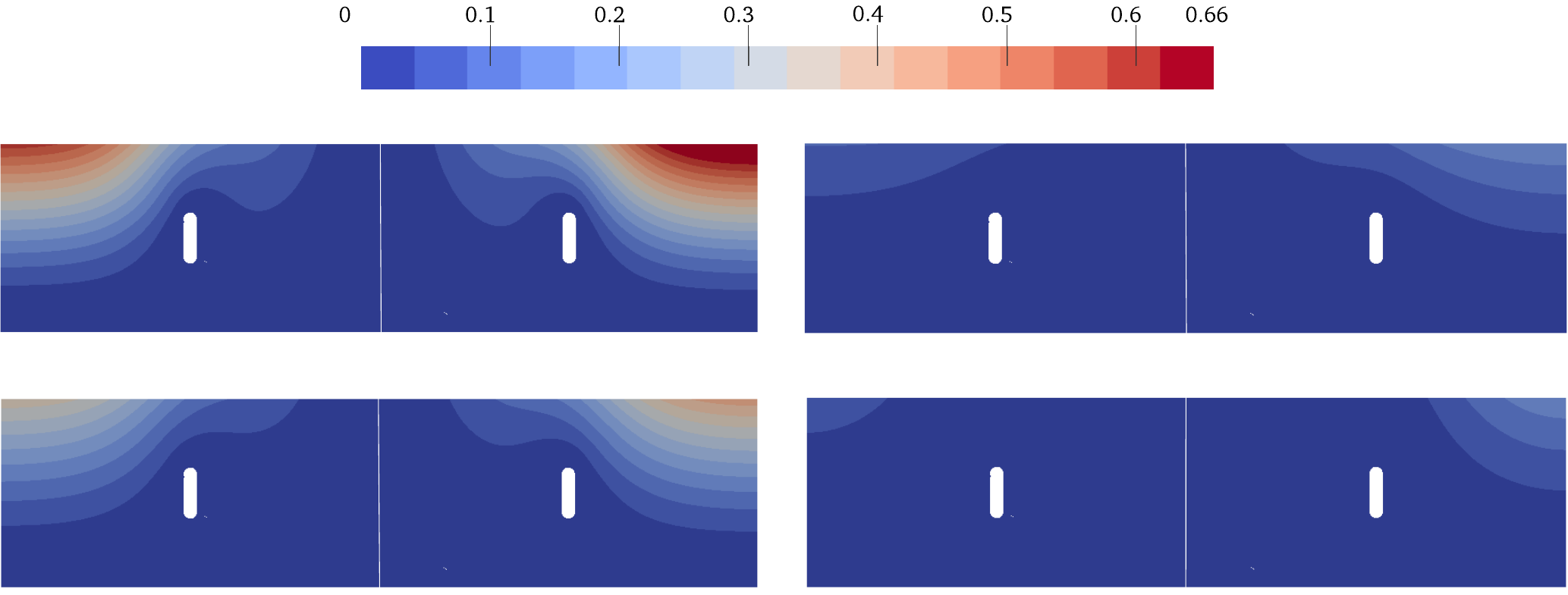}
    \caption{First and total-order partial variances of the vertical displacement (in ${\rm mm}^2$), associated to $\mu(\xi_1)$ (top left), $\lambda(\xi_2)$ (top right), $\alpha(\xi_3)$ (bottom left) and $\kappa(\xi_4)$ (bottom right). 
    The first-order (resp. total) partial variance is plotted in the left (resp. right) half of each plot.
  \label{fig:sens_TC3}} 
\end{figure}


\section{Conclusion}

This paper contains a theoretical and numerical study of the poroelasticity problem with random coefficients. It contains three major contributions.
The first one is a study of the dependence between the physical coefficients, leading to an uncertainty model where the specific storage coefficient is expressed in terms of the other coefficients. 
This construction ensures that the resulting problem remains almost surely within the set of physically possible configurations.
This uncertainty model also results in a reduction in the number of independent coefficients for the stochastic problem to be solved. 
The second contribution concerns the well-posedness of the stochastic Biot problem; we prove the existence of a second order solution provided for random model coefficients satisfying some weak mathematical assumptions that we detail. The existence of a second order solution appears to be a new mathematical result.
The third contribution is a discretization of the stochastic Biot problem using a (non-intrusive) sparse pseudo spectral projection method. This approach relies on an ensemble of deterministic simulations carried out using a Hybrid High-Order discretization. The proposed method is further tested on a set of model problems.


\section*{Acknowledgements}
This work was partially funded by the Bureau de Recherches G\'{e}ologiques et Mini\`{e}res. The work of M. Botti was additionally partially supported by Labex NUMEV (ANR-10-LABX-20) ref. 2014-2-006.

\bibliographystyle{plain}
\bibliography{Main}

\end{document}